\numberwithin{equation}{section}
\newtheorem{Theorem}{Theorem}[section]
\newtheorem{Lemma}[Theorem]{Lemma}
\newtheorem{Proposition}[Theorem]{Proposition}
 { \theoremstyle{definition}
\newtheorem{Definition}[Theorem]{Definition}
\newtheorem{Example}[Theorem]{Example}
\newtheorem{Remark}[Theorem]{Remark} }
\begin{document}

\allowdisplaybreaks

\newcommand{\arXivNumber}{1605.07010}

\renewcommand{\PaperNumber}{111}

\FirstPageHeading

\ShortArticleName{Hypergroups Related to a Pair of Compact Hypergroups}

\ArticleName{Hypergroups Related to a Pair\\ of Compact Hypergroups}

\Author{Herbert HEYER~$^{\dag^1}$, Satoshi KAWAKAMI~$^{\dag^2}$, Tatsuya TSURII~$^{\dag^3}$ and Satoe YAMANAKA~$^{\dag^4}$}

\AuthorNameForHeading{H.~Heyer, S.~Kawakami, T.~Tsurii and S.~Yamanaka}

\Address{$^{\dag^1}$~Universit\"{a}t T\"{u}bingen, Mathematisches Institut,\\
\hphantom{$^{\dag^1}$}~Auf der Morgenstelle 10, 72076, T\"{u}bingen, Germany}
\EmailDD{\href{mailto:herbert.heyer@uni-tuebingen.de}{herbert.heyer@uni-tuebingen.de}}

\Address{$^{\dag^2}$~Nara University of Education, Department of Mathematics,\\
\hphantom{$^{\dag^2}$}~Takabatake-cho Nara, 630-8528, Japan}
\EmailDD{\href{mailto:kawakami@nara-edu.ac.jp}{kawakami@nara-edu.ac.jp}}

\Address{$^{\dag^3}$~Osaka Prefecture University, 1-1 Gakuen-cho, Nakaku, Sakai Osaka, 599-8531, Japan}
\EmailDD{\href{mailto:dw301003@edu.osakafu-u.ac.jp}{dw301003@edu.osakafu-u.ac.jp}}

\Address{$^{\dag^4}$~Nara Women's University, Faculty of Science, Kitauoya-higashimachi, Nara, 630-8506, Japan}
\EmailDD{\href{mailto:s.yamanaka516@gmail.com}{s.yamanaka516@gmail.com}}

\ArticleDates{Received June 02, 2016, in f\/inal form November 10, 2016; Published online November 18, 2016}

\Abstract{The purpose of the present paper is to investigate a hypergroup associated with irreducible characters of a compact hypergroup~$H$ and a closed subhypergroup~$H_0$ of~$H$ with $ |H/H_0|< + \infty$. The convolution of this hypergroup is introduced by inducing irreducible characters of~$H_0$ to~$H$ and by restricting irreducible characters of~$H$ to~$H_0$. The method of proof relies on the notion of an induced character and an admissible hypergroup pair.}

\Keywords{hypergroup; induced character; semi-direct product hypergroup; admissible hypergroup pair}

\Classification{22D30; 22F50; 20N20; 43A62}

\section{Introduction}
The aim of the present paper is to contribute to the largely open problem of establishing a~structure theory of hypergroups. Hypergroups are locally compact spaces on which the bounded measures convolve similar to the group case. The origin of the notion of hypergroup or gene\-ra\-li\-zed translation structure goes back to J.~Delsarte and B.M.~Levitan, the special class of double coset hypergroups appears already in the work of G.~Frobenius.

There exists an axiomatic approach to hypergroups initiated by Charles F.~Dunkl \cite{D1,D2}, R.I.~Jewett~\cite{J}, and R.~Spector~\cite{S}, which lead to an extensive harmonic analysis of hypergroups. For the historical background of the theory we just refer to R.I.~Jewett's fundamental paper~\cite{J} and the monograph~\cite{BH} by W.R.~Bloom and H.~Heyer. In fact, hypergroups arose in the theory of second order dif\/ferential equations and developed to be of signif\/icant applicability in probability theory where the hypergroup convolution of measure ref\/lects a stochastic operation in the basic space of the hypergroup. Nowadays hypergroup structures are studied within various frameworks from non-commutative duality of groups to quantum groups and bimodules.

Since every investigation of the structures of hypergroups is oriented on the search of new, probably large examples, aspects of a partial solution to the structure problem are extension of hypergroups~\cite{HKKK,HK1}, a cohomology theory for hypergroups~\cite{HK2} and imprimitivity of representations of hypergroups~\cite{HK3}. There are interesting results on hypergroup structures arising from dual objects of a hypergroup including the group case~\cite{HK4}. Recent research on the structure of hypergroups relies on the application of induced characters~\cite{HKY,H}, hyperf\/ields~\cite{HKTY2} and compact hypergroup pairs~\cite{HKTY1}. At this point we can outline our new results.

Let $H$ be a strong compact hypergroup satisfying the second axiom of countability, and let $H_0$ be a subhypergroup of $H$ with $|H/H_0| < +\infty$. By $\mathbb{Z}_q(2)$ we denote the $q$-deformation of~$\mathbb{Z}(2)$, and the hats on $H$ and $H_0$ signify their duals. In~\cite{HKY} the notion of an induced character of a~f\/inite-dimensional representation of $H$ was introduced and studied in detail. The results obtained in that paper enable us in the present work to discuss character hypergroups of the type \mbox{$\mathcal{K}\big(\hat{H} \cup \widehat{H_0}, \mathbb{Z}_q(2)\big)$} which generalize those introduced in~\cite{HKTY1}. The admissible {\it group} pair of~\cite{HKTY1} will now be replaced by an admissible {\it hypergroup} pair, and the hypergroup structure of \mbox{$\mathcal{K}\big(\hat{H} \cup \widehat{H_0}, \mathbb{Z}_q(2)\big)$} will be characterized by the hypergroup pair $(H, H_0)$ (Theorem~\ref{Theorem3.8}). Applications to semi-direct product hypergroups follow (Theorem~\ref{Theorem4.7}), and a list of new hypergroups appears in Section~\ref{section5}.

\section{Preliminaries}\label{section2}

In order to facilitate the reader's access to the problem discussed in this paper we recapitulate the notion of a hypergroup and of a few often applied facts. Details of the theory of hypergroups and standard examples can be found in the seminal paper~\cite{J} of R.I.~Jewett and in the monograph~\cite{BH} of W.R.~Bloom and H.~Heyer.

For a given locally compact (Hausdorf\/f) space $X$ we denote by $C^b(X)$ the space of bounded continuous functions on $X$, and by $C_c(X)$ and $C_0(X)$ its subspaces of functions with compact support or of functions vanishing at inf\/inity respectively. For each compact subset~$K$ of~$X$ let~$C_K(X)$ be the subset of functions $f\in C_c(X)$ with supp($f$)$\subset K$. By $M(X)$ we denote the set of Radon measures on $X$ def\/ined as linear functionals on~$C_c(X)$ whose restriction to each~$C_K(X)$ is continuous with respect to the topology of uniform convergence. $M^b(X)$ symbolizes the set of bounded measures on~$X$. In fact, $M^b(X)$ is the dual of the Banach space~$C_0(X)$, and it is furnished with the norm
\begin{gather*}\mu \mapsto \|\mu\| := \sup \{|\mu(f)|\colon \| f\|\leq 1\}.\end{gather*}
Moreover, we shall refer to the subspaces $M_c(X)$ and $M^1(X)$ of measures with compact support or probability measures on $X$ respectively.

Finally, $M^1_c(X) := M^1(X)\cap M_c(X)$. We denote the Dirac measure in $x\in X$ by $\varepsilon_x$.

\looseness=1 A {\it hypergroup} is a locally compact (Hausdorf\/f) space $H$ together with a weakly con\-ti\-nuous associative and bilinear convolution $*$ in the Banach space $M^b(H)$ satisfying the following \mbox{axioms}:
\begin{description}\itemsep=0pt
\item[{\rm (HG1)}]
For all $x,y \in H$, $\varepsilon_x * \varepsilon_y$ belongs to $M^1_c(H)$.

\item[{\rm (HG2)}]
There exist a neutral element $e \in H$ such that
\begin{gather*}
\varepsilon_x * \varepsilon_e = \varepsilon_e * \varepsilon_x = \varepsilon_x
\end{gather*}
for all $x \in H$, and a continuous involution
\begin{gather*}
x \longmapsto x^-
\end{gather*}
in $H$ satisfying
\begin{gather*}
e \in \operatorname{supp}(\varepsilon_x * \varepsilon_y)~~\text{if and only if}~~y = x^-
\end{gather*}
as well as
\begin{gather*}
(\varepsilon_x * \varepsilon_y)^- = \varepsilon_{y^-} * \varepsilon_{x^-}
\end{gather*}
whenever $x,y \in H$.

\item[{\rm (HG3)}]
The mapping
\begin{gather*}
(x,y) \longmapsto \operatorname{supp}(\varepsilon_x * \varepsilon_y)
\end{gather*}
from $H \times H$ into the space of compact subsets of $H$ equipped with the Michael topology is continuous.
\end{description}

As a consequence of the weak continuity and bilinearity the convolution of arbitrary bounded measures on $H$ is uniquely determined by the convolution of Dirac measures. In other words
\begin{gather*}\mu*\nu = \int_H\int_H \varepsilon_x*\varepsilon_y \mu(dx)\nu(dy),\end{gather*}
where $\mu, \nu\in M^b(H)$.

A hypergroup $H$ is called {\it commutative} if its convolution is commutative. Clearly, locally compact groups are hypergroups. Also double coset spaces $G//L$ arising from Gelfand pairs $(G,L)$ are (commutative) hypergroups. Given a hypergroup $H$ one can introduce subhypergroups, quotient hypergroups, direct and semi-direct product hypergroups (for the latter notion see \cite{HK3,W}), and hypergroup joins.

Every compact hypergroup $H$ has the normalized {\it Haar measure} $\omega_H\in M(H)$ which is inva\-riant with respect to the translation
\begin{gather*}f\mapsto f_x, \qquad x\in H,\end{gather*}
where
\begin{gather*}
f_x(y) := \varepsilon_x * \varepsilon_y(f) = \int_H f(z)(\varepsilon_x * \varepsilon_y)(dz)
\end{gather*}
for all $y\in H$.

Let $(H, *)$ and $(L, \circ)$ be two hypergroups with convolutions $*$ and $\circ$ as well with neutral elements $\varepsilon_H$ and $\varepsilon_L$ respectively. A continuous mapping $\varphi\colon H \rightarrow L$ is called a {\it hypergroup homomorphism} if $\varphi(\varepsilon_H)= \varepsilon_L$ and if $\varphi$ is the unique linear weakly continuous extension from~$M^b(H)$ to~$M^b(L)$ satisfying the following conditions:
\begin{enumerate}\itemsep=0pt
\item[(1)]
$\varphi(\varepsilon_x) = \varepsilon_{\varphi(x)}$,
\item[(2)]
$\varphi(\varepsilon_x^-) = \varphi(\varepsilon_x)^- $,
\item[(3)]
$\varphi (\varepsilon_x * \varepsilon_y) = \varepsilon_{\varphi(x)} \circ \varepsilon_{\varphi(y)}$,
\end{enumerate}
\noindent
whenever $x,y \in H$.

If, in addition, $\varphi$ is a homeomorphism from $H$ onto $L$, it is called an {\it isomorphism} from $H$ onto $L$, and in the case $L=H$ it is called an {\it automorphism} of $H$. The set Aut$(H)$ of all automorphisms of $H$ becomes a topological group furnished with the weak topology of $M^b(H)$.

An {\it action} of a locally compact group $G$ on a hypergroup $H$ is a continuous homomorphism from $G$ into Aut$(H)$.

Given an action $\alpha$ of $G$ on $H$ there is the notion of a semi-direct product hypergroup $K=H\rtimes_{\alpha}G$ which in general is a non-commutative hypergroup, ef\/f\/iciently applied all over in our work.

Let $H$ be a hypergroup, and let $\mathcal{H}$ be a (separable) Hilbert space with inner product $\langle\cdot, \cdot\rangle$. By $\mathcal{B}(\mathcal{H})$ we denote the Banach $*$-algebra of bounded linear operator on $\mathcal{H}$. A $*$-homomorphism $\pi \colon M^b(H)\longrightarrow \mathcal{B}(\mathcal{H})$ is called a {\it representation} of $H$ if
\begin{gather*}\pi(\varepsilon_e)=1\end{gather*}
and for all $u, v \in \mathcal{H}$ the mapping
\begin{gather*}\mu \mapsto \langle\pi(\mu)u, v\rangle\end{gather*}
is continuous on $M^b(H)$.

In the sequel we shall deal with classes of representations and of irreducible representations of $H$ under unitary equivalence.

Now let $H$ be a compact hypergroup with a countable basis of its topology. $\hat{H}$ will denote the set of all equivalence classes of irreducible representations of $H$. $H$ is said to be of {\it strong type} if $\hat{H}$ carries a hypergroup structure. If $H$ is commutative, more structure is available. In this case $\hat{H}$ consists of characters of $H$ which are def\/ined as nonvanishing functions $\chi\in C^b(H)$ satisfying the equality
\begin{gather*}(\varepsilon_x * \varepsilon_y^-)(\chi) = \chi(x)\overline{\chi}(y)\end{gather*}
valid for all $x,y \in H$. Once $\hat{H}$ is a hypergroup, the double dual $\hat{\hat{H}}$ can be formed, and the identif\/ication $\hat{\hat{H}}\cong H$ def\/ines {\it Pontryagin hypergroups}.

Returning to an arbitrary compact hypergroup $H$ and a closed subhypergroup $H_0$ of $H$, for a~representation $\pi_0$ of $H_0$ with representing Hilbert space $\mathcal{H}(\pi_0)$ one introduces the {\it representation}
\begin{gather*}\pi := \operatorname{ind}_{H_0}^H \pi_0\end{gather*}
{\it induced} by $\pi_0$ from $H_0$ to $H$ as follows:
\begin{gather*}
\mathcal{H}(\pi) := \big\{\xi \in L^2(H,\mathcal{H}(\pi_0)) \colon
(\varepsilon_{h_0} * \varepsilon_x)(\xi) = \pi_0(h_0)\xi(x) \ {\rm for \ all} \ h_0\in H_0 \big\}\end{gather*}
and
\begin{gather*}(\pi(h)\xi)(x) := (\varepsilon_x * \varepsilon_h)(\xi)\end{gather*}
for all $\xi \in \mathcal{H}(\pi)$, $x,h\in H$.

For further details on induced representations, see~\cite{HK3,HKY}.

\section{Hypergroups related to admissible pairs}\label{section3}

Let $H$ be a strong compact hypergroup which satisf\/ies the second axiom of countability, and $\hat{H}$ its dual. Then
\begin{gather*}
\mathcal{K}(\hat{H})=\big\{{\operatorname{ch}}(\pi) \colon \pi\in \hat{H}\big\}
\end{gather*}
is a countable discrete commutative hypergroup, where{\samepage
\begin{gather*}
\operatorname{ch}(\pi)(h)=\frac{1}{\dim\pi}\operatorname{tr}(\pi(h)).
\end{gather*}
for all $\pi \in \hat{H}$, $h \in H$.}

Now, let $H_0$ be a subhypergroup of $H$ which is assumed to be also of strong type and such that $|H/H_0| < +\infty$. For $\tau \in \widehat{H_0}$ the induced representation $\operatorname{ind}_{H_0}^H\tau$ of $\tau$ from $H_0$ to $H$ is f\/inite-dimensional and decomposes as
\begin{gather*}
\operatorname{ind}_{H_0}^H\tau\cong \pi_1\oplus \cdots\oplus \pi_m,
\end{gather*}
where $\pi_1, \dots, \pi_m \in \hat{H}~(m \geq 1)$. The {\it induced character} of $\operatorname{ch}(\tau)$ is def\/ined as
\begin{gather*}
\operatorname{ind}_{H_0}^H \operatorname{ch}(\tau) := \frac{d(\pi_1)}{d(\pi)} \operatorname{ch}(\pi_1) + \cdots + \frac{d(\pi_m)}{d(\pi)}\operatorname{ch}(\pi_m),
\end{gather*}
where $d(\pi_j)$ for $j=1, \dots, m$ is the hyperdimension of $\pi_j$ in the sense of Vrem \cite{V} and
\begin{gather*}
d(\pi) := d(\pi_1) + \cdots + d(\pi_m).
\end{gather*}

For $\tau_i, \tau_j \in \widehat{H_0}$, $\operatorname{ch}(\tau_i)\operatorname{ch}(\tau_j) \in M_c^1\big(\mathcal{K}(\widehat{H_0})\big)$ such that
\begin{gather*}
\operatorname{ch}(\tau_i)\operatorname{ch}(\tau_j) = a_1 \operatorname{ch}(\tau_1) + \cdots + a_{\ell} \operatorname{ch}(\tau_{\ell}),
\end{gather*}
with $a_k > 0$ $(k=1, \dots, \ell)$ and $a_1 + \cdots + a_{\ell} = 1$. Concerning characters induced from $H_0$ to $H$ and the following def\/inition see~\cite{HKY}.

\begin{Definition}\label{Definition3.1}
For $\tau_i, \tau_j \in \widehat{H_0}$,
\begin{gather*}
\operatorname{ind}_{H_0}^H (\operatorname{ch}(\tau_i)\operatorname{ch}(\tau_j)):= a_1\operatorname{ind}_{H_0}^H \operatorname{ch}(\tau_1) + \cdots +a_{\ell} \operatorname{ind}_{H_0}^H \operatorname{ch}(\tau_{\ell}).
\end{gather*}
\end{Definition}

Our main objective of study will be formulated in the subsequent

\begin{Definition}\label{Definition3.2}
Let $\mathbb{Z}_q(2)$ be a hypergroup of order 2 with parameter $q \in (0,1]$. The {\it twisted convolution} $*=*_q $
on the space
\begin{gather*}
\mathcal{K}\big(\hat{H} \cup \widehat{H_0},\mathbb{Z}_q(2)\big)
:= \big\{(\operatorname{ch}(\pi), \circ), (\operatorname{ch}(\tau), \bullet) \colon \pi \in \hat{H}, \tau \in \widehat{H_0}\big\},
\end{gather*}
associated with $\mathbb{Z}_q(2)$ is given as follows:
\begin{alignat*}{3}
&(1) \quad && (\operatorname{ch}(\pi_i),\circ)*(\operatorname{ch}(\pi_j),\circ):=(\operatorname{ch}(\pi_i)\operatorname{ch}(\pi_j),\circ),&\\
& (2)\quad && (\operatorname{ch}(\pi),\circ)*(\operatorname{ch}(\tau),\bullet):= \big(\big({\operatorname{res}}_{H_0}^H \operatorname{ch}(\pi)\big)\operatorname{ch}(\tau),\bullet\big),&\\
&(3)\quad && (\operatorname{ch}(\tau),\bullet)*(\operatorname{ch}(\pi),\circ):= \big({\operatorname{ch}}(\tau)\big({\operatorname{res}}_{H_0}^H \operatorname{ch}(\pi)\big),\bullet\big),&\\
&(4)\quad && (\operatorname{ch}(\tau_i),\bullet)*(\operatorname{ch}(\tau_j),\bullet):=
q\big({\operatorname{ind}}_{H_0}^H(\operatorname{ch}(\tau_i)\operatorname{ch}(\tau_j)),\circ\big) +(1-q)(\operatorname{ch}(\tau_i)\operatorname{ch}(\tau_j),\bullet).&
\end{alignat*}
\end{Definition}

For details on deformation of hypergroups see \cite{KTY}.

\begin{Definition}\label{Definition3.3}
Let $(H, H_0)$ be a pair of consisting of a compact hypergroup $H$ and a closed subhypergroup $H_0$ of $H$. We call $(H, H_0)$ an {\it admissible hypergroup pair} if the following conditions are satisf\/ied:
\begin{enumerate}\itemsep=0pt
\item[(1)] for $\pi \in \hat{H}$ and $\tau \in \widehat{H_0}$
\begin{gather*}
\operatorname{ind}_{H_0}^H\big(\big({\operatorname{res}}_{H_0}^H \operatorname{ch}(\pi)\big)\operatorname{ch}(\tau)\big) = \operatorname{ch}(\pi) \operatorname{ind}_{H_0}^H \operatorname{ch}(\tau),
\end{gather*}
\item[(2)] for $\tau \in \widehat{H_0}$
\begin{gather*}
\operatorname{res}_{H_0}^H\big({\operatorname{ind}}_{H_0}^H \operatorname{ch}(\tau)\big) = \operatorname{ch}(\tau) \operatorname{res}_{H_0}^H\big({\operatorname{ind}}_{H_0}^H \operatorname{ch}(\tau_0)\big),
\end{gather*}
where $\tau_0$ is the trivial representation of~$H_0$.
\end{enumerate}
\end{Definition}

\begin{Remark}\label{Remark3.4}
If a pair $(G,G_0)$ consisting of a~compact group and a closed subgroup $G_0$ is admissible in the sense of~\cite{HKTY1}, then it is an admissible hypergroup pair.
\end{Remark}

\begin{Lemma}\label{Lemma3.5}
If $(H, H_0)$ is an admissible hypergroup pair, the following formulae hold:
\begin{enumerate}\itemsep=0pt
\item[$(1)$] for $\pi \in \hat{H}$ and $\tau_i, \tau_j \in \widehat{H_0}$
\begin{gather*}
\operatorname{ind}_{H_0}^H\big(\big({\operatorname{res}}_{H_0}^H \operatorname{ch}(\pi)\big)\operatorname{ch}(\tau_i)\operatorname{ch}(\tau_j)\big) = \operatorname{ch}(\pi) \operatorname{ind}_{H_0}^H (\operatorname{ch}(\tau_i)\operatorname{ch}(\tau_j)),
\end{gather*}
\item[$(2)$] for $\tau_i, \tau_j \in \widehat{H_0}$
\begin{gather*}
\operatorname{res}_{H_0}^H\big({\operatorname{ind}}_{H_0}^H \operatorname{ch}(\tau_i)\operatorname{ch}(\tau_j)\big) = \operatorname{ch}(\tau_i)\operatorname{ch}(\tau_j) \operatorname{res}_{H_0}^H\big({\operatorname{ind}}_{H_0}^H \operatorname{ch}(\tau_0)\big).
\end{gather*}
\end{enumerate}
\end{Lemma}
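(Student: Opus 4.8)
The plan is to reduce both identities to the two defining conditions of an admissible hypergroup pair (Definition~\ref{Definition3.3}) by decomposing the product $\operatorname{ch}(\tau_i)\operatorname{ch}(\tau_j)$ into irreducible characters of $H_0$ and then using linearity of the operations $\operatorname{ind}_{H_0}^H$ and $\operatorname{res}_{H_0}^H$ on the resulting finite sums. Concretely, I would start from the decomposition
\begin{gather*}
\operatorname{ch}(\tau_i)\operatorname{ch}(\tau_j) = a_1\operatorname{ch}(\tau_1) + \cdots + a_\ell\operatorname{ch}(\tau_\ell), \qquad a_k > 0, \quad a_1 + \cdots + a_\ell = 1,
\end{gather*}
recalled just before Definition~\ref{Definition3.1}, together with the formula $\operatorname{ind}_{H_0}^H(\operatorname{ch}(\tau_i)\operatorname{ch}(\tau_j)) = \sum_{k=1}^\ell a_k \operatorname{ind}_{H_0}^H\operatorname{ch}(\tau_k)$ of that definition.

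For part $(1)$, I would multiply the decomposition by $\operatorname{res}_{H_0}^H\operatorname{ch}(\pi)$, obtaining $\big(\operatorname{res}_{H_0}^H\operatorname{ch}(\pi)\big)\operatorname{ch}(\tau_i)\operatorname{ch}(\tau_j) = \sum_{k=1}^\ell a_k\big(\operatorname{res}_{H_0}^H\operatorname{ch}(\pi)\big)\operatorname{ch}(\tau_k)$, then apply $\operatorname{ind}_{H_0}^H$ summand by summand, invoke Definition~\ref{Definition3.3}$(1)$ on each term $\big(\operatorname{res}_{H_0}^H\operatorname{ch}(\pi)\big)\operatorname{ch}(\tau_k)$, and recombine via Definition~\ref{Definition3.1}:
\begin{gather*}
\operatorname{ind}_{H_0}^H\big(\big(\operatorname{res}_{H_0}^H\operatorname{ch}(\pi)\big)\operatorname{ch}(\tau_i)\operatorname{ch}(\tau_j)\big)
= \sum_{k=1}^\ell a_k\operatorname{ch}(\pi)\operatorname{ind}_{H_0}^H\operatorname{ch}(\tau_k)
= \operatorname{ch}(\pi)\operatorname{ind}_{H_0}^H(\operatorname{ch}(\tau_i)\operatorname{ch}(\tau_j)).
\end{gather*}
For part $(2)$, I would instead apply $\operatorname{res}_{H_0}^H$, which is linear since it is merely restriction of functions, to $\operatorname{ind}_{H_0}^H(\operatorname{ch}(\tau_i)\operatorname{ch}(\tau_j)) = \sum_k a_k\operatorname{ind}_{H_0}^H\operatorname{ch}(\tau_k)$, use Definition~\ref{Definition3.3}$(2)$ on each summand, and pull out the common factor:
\begin{gather*}
\operatorname{res}_{H_0}^H\big(\operatorname{ind}_{H_0}^H \operatorname{ch}(\tau_i)\operatorname{ch}(\tau_j)\big)
= \sum_{k=1}^\ell a_k\operatorname{ch}(\tau_k)\operatorname{res}_{H_0}^H\big(\operatorname{ind}_{H_0}^H\operatorname{ch}(\tau_0)\big)
= \operatorname{ch}(\tau_i)\operatorname{ch}(\tau_j)\operatorname{res}_{H_0}^H\big(\operatorname{ind}_{H_0}^H\operatorname{ch}(\tau_0)\big),
\end{gather*}
where the last step again uses $\sum_k a_k\operatorname{ch}(\tau_k) = \operatorname{ch}(\tau_i)\operatorname{ch}(\tau_j)$.

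The one point I expect to require care is the additivity of $\operatorname{ind}_{H_0}^H$ invoked in part $(1)$: one needs that inducing $\sum_k a_k\big(\operatorname{res}_{H_0}^H\operatorname{ch}(\pi)\big)\operatorname{ch}(\tau_k)$ equals $\sum_k a_k$ times the induced characters of the individual products $\big(\operatorname{res}_{H_0}^H\operatorname{ch}(\pi)\big)\operatorname{ch}(\tau_k)$ — equivalently, that $\operatorname{ind}_{H_0}^H$ is well defined and respects nonnegative linear combinations of irreducible characters of $H_0$, independently of how a given product of characters is grouped and decomposed. This is exactly the behaviour set up in~\cite{HKY}, where induction of characters is derived from induction of finite-dimensional representations, which commutes with direct sums, by normalising with the hyperdimensions; Definition~\ref{Definition3.1} is precisely the corresponding linear extension. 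Once this consistency is granted, the termwise computations above yield both formulae, and no further estimates or structural facts about the pair $(H,H_0)$ are needed.
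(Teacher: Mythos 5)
Your argument is correct and is exactly the computation the paper leaves implicit: its proof of Lemma~\ref{Lemma3.5} consists of the single remark that the formulae follow from the definition of $\operatorname{ind}_{H_0}^H(\operatorname{ch}(\tau_i)\operatorname{ch}(\tau_j))$, i.e., precisely the decomposition into irreducibles, termwise application of the admissibility conditions, and recombination that you carry out. Your closing observation about the consistency of $\operatorname{ind}_{H_0}^H$ with nonnegative linear combinations is the right point to flag, and it is indeed covered by the construction of induced characters in~\cite{HKY} together with Definition~\ref{Definition3.1}.
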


\begin{proof}It is easy to see the desired formulae by the def\/inition of $\operatorname{ind}_{H_0}^H (\operatorname{ch}(\tau_i)\operatorname{ch}(\tau_j))$.
\end{proof}

\begin{Proposition}\label{Proposition3.6}
If a strong compact hypergroup $H$ together with a strong subhypergroup $H_0$ of $H$ with $|H/H_0|< + \infty$ forms an admissible hypergroup pair, then the following associativity relations hold. For $\pi_i, \pi_j, \pi_k, \pi \in \hat{H}$ and $\tau_i, \tau_j, \tau_k, \tau \in \widehat{H_0}$:
\begin{alignat*}{3}
&(A1)\quad && ((\operatorname{ch}(\pi_i), \circ) * (\operatorname{ch}(\pi_j), \circ)) * (\operatorname{ch}(\pi_k), \circ)
= (\operatorname{ch}(\pi_i), \circ) * ((\operatorname{ch}(\pi_j), \circ) * (\operatorname{ch}(\pi_k), \circ)),& \\
&(A2) \quad && ((\operatorname{ch}(\tau), \bullet) * (\operatorname{ch}(\pi_i), \circ)) * (\operatorname{ch}(\pi_j), \circ)
= (\operatorname{ch}(\tau), \bullet) * ((\operatorname{ch}(\pi_i), \circ) * (\operatorname{ch}(\pi_j), \circ)),&\\
&(A3)\quad && ((\operatorname{ch}(\tau_i), \bullet) * (\operatorname{ch}(\tau_j), \bullet)) * (\operatorname{ch}(\pi), \circ)
=(\operatorname{ch}(\tau_i), \bullet) * ((\operatorname{ch}(\tau_j), \bullet) * (\operatorname{ch}(\pi), \circ)), &\\
&(A4)\quad && ((\operatorname{ch}(\tau_i), \bullet) * (\operatorname{ch}(\tau_j), \bullet)) * (\operatorname{ch}(\tau_k), \bullet)
=(\operatorname{ch}(\tau_i), \bullet) * ((\operatorname{ch}(\tau_j), \bullet) * (\operatorname{ch}(\tau_k), \bullet)).&
\end{alignat*}
\end{Proposition}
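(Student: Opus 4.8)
The plan is to verify each of $(A1)$--$(A4)$ by a direct expansion of both sides through the defining rules $(1)$--$(4)$ of Definition~\ref{Definition3.2}, keeping the $\circ$-component and the $\bullet$-component of every resulting measure separate. Since $*_q$ is obtained from its values on generators by bilinear extension, it suffices to treat generators $(\operatorname{ch}(\pi),\circ)$ and $(\operatorname{ch}(\tau),\bullet)$, decomposing any intermediate product such as $\operatorname{ch}(\tau_i)\operatorname{ch}(\tau_j)$ or $(\operatorname{res}_{H_0}^H\operatorname{ch}(\pi))\operatorname{ch}(\tau)$ into its irreducible constituents and applying the rules term by term before reassembling. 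First I would collect the ingredients: (i)~$\mathcal{K}(\hat H)$ and $\mathcal{K}(\widehat{H_0})$ are commutative hypergroups, so their multiplications are associative and commutative; (ii)~$\operatorname{res}_{H_0}^H$ is multiplicative, $\operatorname{res}_{H_0}^H(\operatorname{ch}(\pi_i)\operatorname{ch}(\pi_j))=(\operatorname{res}_{H_0}^H\operatorname{ch}(\pi_i))(\operatorname{res}_{H_0}^H\operatorname{ch}(\pi_j))$, which is immediate because multiplication in the character hypergroup is pointwise multiplication of the normalized character functions while $\operatorname{res}_{H_0}^H$ is just restriction of functions; (iii)~$\operatorname{ind}_{H_0}^H$ is $\mathbb{R}$-linear on the span of the $\operatorname{ch}(\tau)$ by Definition~\ref{Definition3.1}; and (iv)~the admissibility relations in the form of Lemma~\ref{Lemma3.5}.

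For $(A1)$ both sides reduce, via rule $(1)$, to the associativity of $\mathcal{K}(\hat H)$. For $(A2)$ the left-hand side becomes, by rules $(3)$ and then $(1)$, $\big((\operatorname{ch}(\tau)(\operatorname{res}_{H_0}^H\operatorname{ch}(\pi_i)))(\operatorname{res}_{H_0}^H\operatorname{ch}(\pi_j)),\bullet\big)$, whereas the right-hand side becomes, by rules $(1)$ and then $(3)$ together with~(ii), $\big(\operatorname{ch}(\tau)((\operatorname{res}_{H_0}^H\operatorname{ch}(\pi_i))(\operatorname{res}_{H_0}^H\operatorname{ch}(\pi_j))),\bullet\big)$, and the two coincide by associativity of $\mathcal{K}(\widehat{H_0})$. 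For $(A3)$, expanding the left-hand side by rule $(4)$ and then by rules $(1)$ and $(3)$ gives
\[
q\big(\operatorname{ind}_{H_0}^H(\operatorname{ch}(\tau_i)\operatorname{ch}(\tau_j))\,\operatorname{ch}(\pi),\circ\big)+(1-q)\big((\operatorname{ch}(\tau_i)\operatorname{ch}(\tau_j))(\operatorname{res}_{H_0}^H\operatorname{ch}(\pi)),\bullet\big),
\]
while expanding the right-hand side by rule $(3)$ and then by rule $(4)$ with~(iii) gives
\[
q\big(\operatorname{ind}_{H_0}^H\big((\operatorname{res}_{H_0}^H\operatorname{ch}(\pi))\operatorname{ch}(\tau_i)\operatorname{ch}(\tau_j)\big),\circ\big)+(1-q)\big(\operatorname{ch}(\tau_i)(\operatorname{ch}(\tau_j)(\operatorname{res}_{H_0}^H\operatorname{ch}(\pi))),\bullet\big);
\]
the $\bullet$-parts agree by associativity and commutativity of $\mathcal{K}(\widehat{H_0})$, and the $\circ$-parts agree by Lemma~\ref{Lemma3.5}$(1)$ together with the commutativity of $\mathcal{K}(\hat H)$.

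The hard part will be $(A4)$, which is essentially an exercise in bookkeeping the coefficients $q$ and $1-q$. Applying rule $(4)$ to the bracketed $\tau$-product on each side and then rules $(2)$, $(3)$, $(4)$ to the remaining convolution, I expect to find that the $\circ$-component of the left-hand side is $q(1-q)\big(\operatorname{ind}_{H_0}^H((\operatorname{ch}(\tau_i)\operatorname{ch}(\tau_j))\operatorname{ch}(\tau_k)),\circ\big)$ and its $\bullet$-component is
\[
q\big((\operatorname{res}_{H_0}^H\operatorname{ind}_{H_0}^H(\operatorname{ch}(\tau_i)\operatorname{ch}(\tau_j)))\operatorname{ch}(\tau_k),\bullet\big)+(1-q)^2\big((\operatorname{ch}(\tau_i)\operatorname{ch}(\tau_j))\operatorname{ch}(\tau_k),\bullet\big),
\]
and that the right-hand side yields the same expressions with $(\operatorname{ch}(\tau_i)\operatorname{ch}(\tau_j))\operatorname{ch}(\tau_k)$ replaced by $\operatorname{ch}(\tau_i)(\operatorname{ch}(\tau_j)\operatorname{ch}(\tau_k))$ and with $\operatorname{res}_{H_0}^H\operatorname{ind}_{H_0}^H(\operatorname{ch}(\tau_i)\operatorname{ch}(\tau_j))\cdot\operatorname{ch}(\tau_k)$ replaced by $\operatorname{ch}(\tau_i)\cdot\operatorname{res}_{H_0}^H\operatorname{ind}_{H_0}^H(\operatorname{ch}(\tau_j)\operatorname{ch}(\tau_k))$. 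The $\circ$-components and the $(1-q)^2$-summands then match immediately by associativity of $\mathcal{K}(\widehat{H_0})$ (and, for the $\circ$-components, $\mathbb{R}$-linearity of $\operatorname{ind}_{H_0}^H$). The decisive step is the two $q$-summands: here I would invoke Lemma~\ref{Lemma3.5}$(2)$ to rewrite $\operatorname{res}_{H_0}^H\operatorname{ind}_{H_0}^H(\operatorname{ch}(\tau_i)\operatorname{ch}(\tau_j))=(\operatorname{ch}(\tau_i)\operatorname{ch}(\tau_j))\,c_0$ and $\operatorname{res}_{H_0}^H\operatorname{ind}_{H_0}^H(\operatorname{ch}(\tau_j)\operatorname{ch}(\tau_k))=(\operatorname{ch}(\tau_j)\operatorname{ch}(\tau_k))\,c_0$ with the \emph{single fixed} measure $c_0:=\operatorname{res}_{H_0}^H\operatorname{ind}_{H_0}^H\operatorname{ch}(\tau_0)\in M_c^1(\mathcal{K}(\widehat{H_0}))$; both $q$-summands then become $q\big(\operatorname{ch}(\tau_i)\operatorname{ch}(\tau_j)\operatorname{ch}(\tau_k)\,c_0,\bullet\big)$ computed in the commutative hypergroup $\mathcal{K}(\widehat{H_0})$, so they coincide. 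The main obstacle is therefore not any single deep fact but carrying out the eightfold expansion carefully enough to confirm that every $\operatorname{res}_{H_0}^H\operatorname{ind}_{H_0}^H$ produced on either side is exactly of the kind handled by Lemma~\ref{Lemma3.5}$(2)$, which is precisely what symmetrizes the two sides. The mixed associativity patterns not displayed in the statement, namely those with a $\circ$-tagged factor in the outer or middle slot, are checked by entirely analogous expansions using the same ingredients (Lemma~\ref{Lemma3.5}$(2)$ not being needed for those).
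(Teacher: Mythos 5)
Your proposal is correct and follows essentially the same route as the paper: a direct expansion of both sides via rules (1)--(4) of Definition~\ref{Definition3.2}, using multiplicativity of restriction for $(A2)$, Lemma~\ref{Lemma3.5}(1) for the $\circ$-part of $(A3)$, and Lemma~\ref{Lemma3.5}(2) to symmetrize the $q$-summand in $(A4)$ via the fixed factor $\operatorname{res}_{H_0}^H\operatorname{ind}_{H_0}^H\operatorname{ch}(\tau_0)$. The only cosmetic difference is that the paper expands just the left-hand side of $(A4)$ and reads off the symmetry, whereas you expand both sides and match components.
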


\begin{proof} $(A1)$ is clear by the assumption that $H$ is strong, i.e., $\mathcal{K}(\hat{H})$ is a hypergroup.

$(A2)$ For $\tau \in \widehat{H_0}$ and $\pi_i, \pi_j \in \hat{H}$,
\begin{gather*}
((\operatorname{ch}(\tau), \bullet) * (\operatorname{ch}(\pi_i), \circ)) * (\operatorname{ch}(\pi_j), \circ) = \big({\operatorname{ch}}(\tau) \operatorname{res}_{H_0}^H \operatorname{ch}(\pi_i), \bullet\big) * (\operatorname{ch}(\pi_j), \circ)\\
\hphantom{((\operatorname{ch}(\tau), \bullet) * (\operatorname{ch}(\pi_i), \circ)) * (\operatorname{ch}(\pi_j), \circ)}{} = \big({\operatorname{ch}}(\tau) \big({\operatorname{res}}_{H_0}^H \operatorname{ch}(\pi_i)\big) \big({\operatorname{res}}_{H_0}^H \operatorname{ch}(\pi_j)\big) , \bullet\big).
\end{gather*}
On the other hand,
\begin{gather*}
(\operatorname{ch}(\tau), \bullet) * ((\operatorname{ch}(\pi_i), \circ) * (\operatorname{ch}(\pi_j), \circ))
= (\operatorname{ch}(\tau), \bullet) * (\operatorname{ch}(\pi_i) \operatorname{ch}(\pi_j), \circ)\\
\hphantom{(\operatorname{ch}(\tau), \bullet) * ((\operatorname{ch}(\pi_i), \circ) * (\operatorname{ch}(\pi_j), \circ))}{}
= (\operatorname{ch}(\tau) \operatorname{res}_{H_0}^H (\operatorname{ch}(\pi_i) \operatorname{ch}(\pi_j)), \bullet) \\
\hphantom{(\operatorname{ch}(\tau), \bullet) * ((\operatorname{ch}(\pi_i), \circ) * (\operatorname{ch}(\pi_j), \circ))}{}
= \big({\operatorname{ch}}(\tau) \big({\operatorname{res}}_{H_0}^H \operatorname{ch}(\pi_i)\big) \big({\operatorname{res}}_{H_0}^H \operatorname{ch}(\pi_j)\big), \bullet\big).
\end{gather*}

$(A3)$ For $\tau_i, \tau_j \in \widehat{H_0}$ and $\pi \in \hat{H}$,
\begin{gather*}
((\operatorname{ch}(\tau_i), \bullet) * (\operatorname{ch}(\tau_j), \bullet)) * (\operatorname{ch}(\pi), \circ)\\
\qquad{} = \big(q\big({\operatorname{ind}}_{H_0}^H(\operatorname{ch}(\tau_i)\operatorname{ch}(\tau_j)), \circ\big)+(1-q)(\operatorname{ch}(\tau_i)\operatorname{ch}(\tau_j),\bullet)\big)* (\operatorname{ch}(\pi), \circ)\\
\qquad{}= q\big({\operatorname{ind}}_{H_0}^H(\operatorname{ch}(\tau_i)\operatorname{ch}(\tau_j)), \circ\big)* (\operatorname{ch}(\pi), \circ)+(1-q)(\operatorname{ch}(\tau_i)\operatorname{ch}(\tau_j),\bullet)* (\operatorname{ch}(\pi), \circ)\\
\qquad{} = q\big({\operatorname{ind}}_{H_0}^H(\operatorname{ch}(\tau_i)\operatorname{ch}(\tau_j))\operatorname{ch}(\pi), \circ\big)+(1-q)\big({\operatorname{ch}}(\tau_i)\operatorname{ch}(\tau_j)\operatorname{res}_{H_0}^H\operatorname{ch}(\pi), \bullet\big)\\
\qquad{} =q\big({\operatorname{ind}}_{H_0}^H (\operatorname{ch}(\tau_i)\operatorname{ch}(\tau_j)\operatorname{res}_{H_0}^H \operatorname{ch}(\pi)),\circ\big)+(1-q)\big({\operatorname{ch}}(\tau_i)\operatorname{ch}(\tau_j)\operatorname{res}_{H_0}^H\operatorname{ch}(\pi), \bullet\big)\\
\qquad{}=(\operatorname{ch}(\tau_i), \bullet) * \big({\operatorname{ch}}(\tau_j)\operatorname{res}_{H_0}^H \operatorname{ch}(\pi), \bullet\big)\\
\qquad{} =(\operatorname{ch}(\tau_i), \bullet) * ((\operatorname{ch}(\tau_j), \bullet) * (\operatorname{ch}(\pi), \circ)).
\end{gather*}

$(A4)$ For $\tau_i, \tau_j, \tau_k \in \widehat{H_0}$
\begin{gather*}
((\operatorname{ch}(\tau_i), \bullet) * (\operatorname{ch}(\tau_j), \bullet)) * (\operatorname{ch}(\tau_k), \bullet)\\
\qquad{} = \big(q \big({\operatorname{ind}}_{H_0}^H(\operatorname{ch}(\tau_i)\operatorname{ch}(\tau_j)),\circ\big)+(1-q)(\operatorname{ch}(\tau_i)\operatorname{ch}(\tau_j),\bullet)\big) * (\operatorname{ch}(\tau_k), \bullet)\\
\qquad{}= q\big({\operatorname{ind}}_{H_0}^H(\operatorname{ch}(\tau_i)\operatorname{ch}(\tau_j)),\circ\big)*(\operatorname{ch}(\tau_k), \bullet)+(1-q)(\operatorname{ch}(\tau_i)\operatorname{ch}(\tau_j),\bullet) * (\operatorname{ch}(\tau_k), \bullet)\\
\qquad {}= q\big({\operatorname{res}}_{H_0}^H\big({\operatorname{ind}}_{H_0}^H(\operatorname{ch}(\tau_i)\operatorname{ch}(\tau_j))\big)\operatorname{ch}(\tau_k), \bullet\big)\\
\qquad\quad{} + (1-q)q\big({\operatorname{ind}}_{H_0}^H (\operatorname{ch}(\tau_i)\operatorname{ch}(\tau_j)\operatorname{ch}(\tau_k)), \circ\big)+ (1-q)^2(\operatorname{ch}(\tau_i)\operatorname{ch}(\tau_j)\operatorname{ch}(\tau_k), \bullet)\\
\qquad{}= q\big({\operatorname{res}}_{H_0}^H\big({\operatorname{ind}}_{H_0}^H \operatorname{ch}(\tau_0)\big)(\operatorname{ch}(\tau_i)\operatorname{ch}(\tau_j)\operatorname{ch}(\tau_k)), \bullet\big)\\
\qquad\quad{} + (1-q)q\big({\operatorname{ind}}_{H_0}^H (\operatorname{ch}(\tau_i)\operatorname{ch}(\tau_j)\operatorname{ch}(\tau_k)), \circ\big)+ (1-q)^2(\operatorname{ch}(\tau_i)\operatorname{ch}(\tau_j)\operatorname{ch}(\tau_k), \bullet).
\end{gather*}
This implies the associativity:
\begin{gather*}
((\operatorname{ch}(\tau_i), \bullet) * (\operatorname{ch}(\tau_j), \bullet)) * (\operatorname{ch}(\tau_k), \bullet) =
(\operatorname{ch}(\tau_i), \bullet) *( (\operatorname{ch}(\tau_j), \bullet) * (\operatorname{ch}(\tau_k), \bullet)).\tag*{\qed}
\end{gather*}
\renewcommand{\qed}{}
\end{proof}

\begin{Proposition}\label{Proposition3.7}
Let $H$ be a strong compact hypergroup and $H_0$ a subhypergroup of $H$ which is also of strong type and such that $|H/H_0| < +\infty$. If $\mathcal{K}\big(\hat{H} \cup \widehat{H_0}, \mathbb{Z}_q(2)\big)$ is a hypergroup, then $(H, H_0)$ is an admissible hypergroup pair.
\end{Proposition}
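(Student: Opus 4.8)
For the plan I will use only the associativity axiom of $\mathcal{K}\big(\hat{H}\cup\widehat{H_0},\mathbb{Z}_q(2)\big)$ and extract conditions~(1) and~(2) of Definition~\ref{Definition3.3} by comparing the two bracketings of suitably chosen triple convolution products. Throughout one works with the bilinear extension of the four rules of Definition~\ref{Definition3.2} and of $\operatorname{ind}_{H_0}^H$, $\operatorname{res}_{H_0}^H$ to the relevant probability measures (as already in Definition~\ref{Definition3.1} and in the proof of Proposition~\ref{Proposition3.6}), and one uses that the underlying set is the disjoint union of a ``$\circ$-sheet'', a copy of $\mathcal{K}(\hat H)$, and a ``$\bullet$-sheet'', a copy of $\mathcal{K}(\widehat{H_0})$; hence every measure in $M_c^1$ of the big hypergroup has a unique $\circ$-part and $\bullet$-part, and two such measures agree if and only if both parts agree. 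Write $\pi_0$, $\tau_0$ for the trivial representations of $H$, $H_0$; then $(\operatorname{ch}(\pi_0),\circ)$ is the neutral element, $\operatorname{ch}(\tau_0)$ is the identity of $\mathcal{K}(\widehat{H_0})$, and $\operatorname{res}_{H_0}^H\operatorname{ch}(\pi_0)=\operatorname{ch}(\tau_0)$.

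For condition~(1), fix $\pi\in\hat{H}$, $\tau\in\widehat{H_0}$ and expand the two bracketings of $(\operatorname{ch}(\pi),\circ)*(\operatorname{ch}(\tau),\bullet)*(\operatorname{ch}(\tau_0),\bullet)$. Associating to the left, rule~(2) gives the intermediate measure $\big((\operatorname{res}_{H_0}^H\operatorname{ch}(\pi))\operatorname{ch}(\tau),\bullet\big)$; since $(\operatorname{res}_{H_0}^H\operatorname{ch}(\pi))\operatorname{ch}(\tau)$ is a convex combination of characters of $H_0$, convolving it with $(\operatorname{ch}(\tau_0),\bullet)$ via the bilinear form of rule~(4) (and $\operatorname{ch}(\sigma)\operatorname{ch}(\tau_0)=\operatorname{ch}(\sigma)$) yields $q\big(\operatorname{ind}_{H_0}^H((\operatorname{res}_{H_0}^H\operatorname{ch}(\pi))\operatorname{ch}(\tau)),\circ\big)+(1-q)\big((\operatorname{res}_{H_0}^H\operatorname{ch}(\pi))\operatorname{ch}(\tau),\bullet\big)$. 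Associating to the right, rule~(4) gives $(\operatorname{ch}(\tau),\bullet)*(\operatorname{ch}(\tau_0),\bullet)=q(\operatorname{ind}_{H_0}^H\operatorname{ch}(\tau),\circ)+(1-q)(\operatorname{ch}(\tau),\bullet)$, and convolving $(\operatorname{ch}(\pi),\circ)$ on the left --- rule~(1) on the $\circ$-summand (legitimate because $\operatorname{ind}_{H_0}^H\operatorname{ch}(\tau)\in M_c^1(\mathcal{K}(\hat H))$) and rule~(2) on the $\bullet$-summand --- gives $q\big(\operatorname{ch}(\pi)\operatorname{ind}_{H_0}^H\operatorname{ch}(\tau),\circ\big)+(1-q)\big((\operatorname{res}_{H_0}^H\operatorname{ch}(\pi))\operatorname{ch}(\tau),\bullet\big)$. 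By associativity these two measures coincide; their $\bullet$-parts are already equal, so equating the $\circ$-parts and using $q\ne0$ gives $\operatorname{ind}_{H_0}^H\big((\operatorname{res}_{H_0}^H\operatorname{ch}(\pi))\operatorname{ch}(\tau)\big)=\operatorname{ch}(\pi)\operatorname{ind}_{H_0}^H\operatorname{ch}(\tau)$, which is Definition~\ref{Definition3.3}(1).

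For condition~(2), run the computation of $(A4)$ from the proof of Proposition~\ref{Proposition3.6} with \emph{general} $\tau_i,\tau_j,\tau_k\in\widehat{H_0}$ but without invoking admissibility. Associating to the left one obtains, via rule~(4), then rule~(2) on the $\circ$-part and rule~(4) bilinearly (plus associativity of $\mathcal{K}(\widehat{H_0})$) on the $\bullet$-part, the measure $q\big((\operatorname{res}_{H_0}^H\operatorname{ind}_{H_0}^H(\operatorname{ch}(\tau_i)\operatorname{ch}(\tau_j)))\operatorname{ch}(\tau_k),\bullet\big)+(1-q)q\big(\operatorname{ind}_{H_0}^H(\operatorname{ch}(\tau_i)\operatorname{ch}(\tau_j)\operatorname{ch}(\tau_k)),\circ\big)+(1-q)^2\big(\operatorname{ch}(\tau_i)\operatorname{ch}(\tau_j)\operatorname{ch}(\tau_k),\bullet\big)$; associating to the right one obtains, via rule~(4), then rule~(3) on the $\circ$-part and rule~(4) on the $\bullet$-part, the measure $q\big(\operatorname{ch}(\tau_i)\operatorname{res}_{H_0}^H\operatorname{ind}_{H_0}^H(\operatorname{ch}(\tau_j)\operatorname{ch}(\tau_k)),\bullet\big)+(1-q)q\big(\operatorname{ind}_{H_0}^H(\operatorname{ch}(\tau_i)\operatorname{ch}(\tau_j)\operatorname{ch}(\tau_k)),\circ\big)+(1-q)^2\big(\operatorname{ch}(\tau_i)\operatorname{ch}(\tau_j)\operatorname{ch}(\tau_k),\bullet\big)$. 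The last two summands are common to both; equating the remaining $\bullet$-parts and dividing by $q>0$ gives $(\operatorname{res}_{H_0}^H\operatorname{ind}_{H_0}^H(\operatorname{ch}(\tau_i)\operatorname{ch}(\tau_j)))\operatorname{ch}(\tau_k)=\operatorname{ch}(\tau_i)\operatorname{res}_{H_0}^H\operatorname{ind}_{H_0}^H(\operatorname{ch}(\tau_j)\operatorname{ch}(\tau_k))$ for all $\tau_i,\tau_j,\tau_k$. Specializing to $\tau_i=\tau$ and $\tau_j=\tau_k=\tau_0$ collapses this to $\operatorname{res}_{H_0}^H\operatorname{ind}_{H_0}^H\operatorname{ch}(\tau)=\operatorname{ch}(\tau)\operatorname{res}_{H_0}^H\operatorname{ind}_{H_0}^H\operatorname{ch}(\tau_0)$, which is Definition~\ref{Definition3.3}(2). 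Hence $(H,H_0)$ is an admissible hypergroup pair.

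I do not expect a genuine obstacle here: the argument is essentially the converse reading of Proposition~\ref{Proposition3.6}. The points requiring care are purely bookkeeping --- justifying the bilinear extensions of the rules and of $\operatorname{ind}_{H_0}^H$, $\operatorname{res}_{H_0}^H$; checking that the intermediate measures really lie in $M_c^1$ of the appropriate sheet (this is where strongness of $H_0$ and $|H/H_0|<\infty$ enter, ensuring in particular $\operatorname{ind}_{H_0}^H(\operatorname{ch}(\tau_i)\operatorname{ch}(\tau_j))\in M_c^1(\mathcal{K}(\hat H))$ so that rule~(2) may be applied to it); and observing that, because the $\circ$- and $\bullet$-sheets are disjoint, component-wise comparison of measures is legitimate and --- together with $q\ne0$ --- yields the stated character identities.
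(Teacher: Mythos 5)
Your proposal is correct and follows essentially the same route as the paper: both extract the two admissibility identities by comparing the two bracketings of triple convolution products involving the trivial character $\operatorname{ch}(\tau_0)$ and then separating the $\circ$- and $\bullet$-components. The only cosmetic differences are that for condition (1) you use the triple $(\operatorname{ch}(\pi),\circ),(\operatorname{ch}(\tau),\bullet),(\operatorname{ch}(\tau_0),\bullet)$ where the paper uses its relation $(A3)$ with the order $(\operatorname{ch}(\tau),\bullet),(\operatorname{ch}(\tau_0),\bullet),(\operatorname{ch}(\pi),\circ)$, and for condition (2) you derive the general consequence of $(A4)$ before specializing, whereas the paper sets $\tau_j=\tau_k=\tau_0$ from the outset.
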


\begin{proof} (1) By the associativity $(A3)$ we have
\begin{gather*}
(\operatorname{ch}(\tau), \bullet) *( (\operatorname{ch}(\tau_0), \bullet) * (\operatorname{ch}(\pi), \circ))
=((\operatorname{ch}(\tau), \bullet) * (\operatorname{ch}(\tau_0), \bullet) )* (\operatorname{ch}(\pi), \circ),
\end{gather*}
and the formulae
\begin{gather*}(\operatorname{ch}(\tau), \bullet) *( (\operatorname{ch}(\tau_0), \bullet) * (\operatorname{ch}(\pi), \circ))\\
\qquad{} = q\big({\operatorname{ind}}_{H_0}^H \big({\operatorname{ch}}(\tau)\operatorname{res}_{H_0}^H \operatorname{ch}(\pi)\big),\circ\big)+(1-q)\big({\operatorname{ch}}(\tau)\operatorname{res}_{H_0}^H\operatorname{ch}(\pi), \bullet\big)\end{gather*}
and
\begin{gather*}((\operatorname{ch}(\tau), \bullet) * (\operatorname{ch}(\tau_0), \bullet))* (\operatorname{ch}(\pi), \circ)\\
\qquad{} = q\big(\big({\operatorname{ind}}_{H_0}^H\operatorname{ch}(\tau)\big)\operatorname{ch}(\pi), \circ)+(1-q)\big({\operatorname{ch}}(\tau)\operatorname{res}_{H_0}^H\operatorname{ch}(\pi), \bullet\big).
\end{gather*}
Hence we obtain the admissibility condition (1)
\begin{gather*}
\operatorname{ind}_{H_0}^H\big(\big({\operatorname{res}}_{H_0}^H \operatorname{ch}(\pi)\big)\operatorname{ch}(\tau)\big) = \operatorname{ch}(\pi) \operatorname{ind}_{H_0}^H \operatorname{ch}(\tau).
\end{gather*}

(2) By the associativity $(A4)$
\begin{gather*}(\operatorname{ch}(\tau), \bullet) *( (\operatorname{ch}(\tau_0), \bullet) * (\operatorname{ch}(\tau_0), \bullet))\\
\qquad{} = ((\operatorname{ch}(\tau), \bullet) * (\operatorname{ch}(\tau_0), \bullet)) * (\operatorname{ch}(\tau_0), \bullet),
 ((\operatorname{ch}(\tau), \bullet) * (\operatorname{ch}(\tau_0), \bullet)) * (\operatorname{ch}(\tau_0), \bullet)\\
\qquad{} = q\big({\operatorname{res}}_{H_0}^H\big({\operatorname{ind}}_{H_0}^H\operatorname{ch}(\tau), \bullet\big)
 + (1-q)q\big({\operatorname{ind}}_{H_0}^H \operatorname{ch}(\tau)\big), \circ\big)+ (1-q)^2(\operatorname{ch}(\tau), \bullet),\end{gather*}
and
\begin{gather*}(\operatorname{ch}(\tau), \bullet) *( (\operatorname{ch}(\tau_0), \bullet) * (\operatorname{ch}(\tau_0), \bullet))\\
\qquad{} = q\big({\operatorname{ch}}(\tau)\operatorname{res}_{H_0}^H\big({\operatorname{ind}}_{H_0}^H\operatorname{ch}(\tau_0)\big), \bullet\big)
 + (1-q)q\big({\operatorname{ind}}_{H_0}^H \operatorname{ch}(\tau), \circ\big)+ (1-q)^2(\operatorname{ch}(\tau), \bullet).\end{gather*}
Hence we obtain the admissibility condition (2)
\begin{gather*}
\operatorname{res}_{H_0}^H\big({\operatorname{ind}}_{H_0}^H \operatorname{ch}(\tau)\big) = \operatorname{ch}(\tau) \operatorname{res}_{H_0}^H\big({\operatorname{ind}}_{H_0}^H \operatorname{ch}(\tau_0)\big).\tag*{\qed}
\end{gather*}\renewcommand{\qed}{}
\end{proof}

\begin{Theorem}\label{Theorem3.8} Let $H$ be a strong compact hypergroup and $H_0$ a subhypergroup of $H$ which is also of strong type and such that $|H/H_0| < +\infty$. Then $\mathcal{K}\big(\hat{H} \cup \widehat{H_0}, \mathbb{Z}_q(2)\big)$ is a hypergroup if and only if $(H, H_0)$ is an admissible hypergroup pair.
\end{Theorem}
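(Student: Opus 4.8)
The statement is an equivalence whose ``only if'' part is already established: if $\mathcal{K}\big(\hat{H}\cup\widehat{H_0},\mathbb{Z}_q(2)\big)$ is a hypergroup, then $(H,H_0)$ is an admissible hypergroup pair, which is exactly Proposition~\ref{Proposition3.7}. So the plan is to prove the converse. Assuming $(H,H_0)$ admissible, I would verify that the twisted convolution $*_q$ of Definition~\ref{Definition3.2} satisfies axioms (HG1)--(HG3) together with associativity; the latter is precisely Proposition~\ref{Proposition3.6}, and bilinearity is built into the defining formulas, so the work left is (HG1), (HG2), (HG3) and weak continuity.

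Since the underlying space is the countable discrete union of the discrete hypergroups $\mathcal{K}(\hat H)$ and $\mathcal{K}(\widehat{H_0})$, weak continuity and the Michael-continuity of $(x,y)\mapsto\operatorname{supp}(\varepsilon_x*\varepsilon_y)$ (HG3) are automatic. For (HG1) I would run through the four cases of Definition~\ref{Definition3.2}: in case~(1) the value $(\operatorname{ch}(\pi_i)\operatorname{ch}(\pi_j),\circ)$ is a finitely supported probability measure because $\mathcal{K}(\hat H)$ is a discrete hypergroup; in cases~(2) and~(3) the same holds in $\mathcal{K}(\widehat{H_0})$, using that $\operatorname{res}_{H_0}^H\operatorname{ch}(\pi)\in M_c^1\big(\mathcal{K}(\widehat{H_0})\big)$, which follows from the finite decomposition of $\operatorname{res}_{H_0}^H\pi$ into irreducibles of $H_0$ weighted by their (positive) hyperdimensions; in case~(4) the value is the convex combination, with weights $q$ and $1-q$, of the probability measure $\operatorname{ind}_{H_0}^H(\operatorname{ch}(\tau_i)\operatorname{ch}(\tau_j))$ on $\mathcal{K}(\hat H)$ (itself a convex combination of the $\operatorname{ind}_{H_0}^H\operatorname{ch}(\tau_k)$, each a probability measure by the definition of an induced character) and the probability measure $\operatorname{ch}(\tau_i)\operatorname{ch}(\tau_j)$ on $\mathcal{K}(\widehat{H_0})$, hence again a finitely supported probability measure.

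For (HG2) I would take the neutral element to be $e:=(\operatorname{ch}(\pi_0),\circ)$ with $\pi_0$ the trivial representation of $H$; since $\operatorname{ch}(\pi_0)$ is the identity of $\mathcal{K}(\hat H)$ and $\operatorname{res}_{H_0}^H\operatorname{ch}(\pi_0)=\operatorname{ch}(\tau_0)$ is the identity of $\mathcal{K}(\widehat{H_0})$, cases~(1)--(3) of Definition~\ref{Definition3.2} give $\varepsilon_e*\varepsilon_x=\varepsilon_x*\varepsilon_e=\varepsilon_x$ for all $x$. I would define the involution by $(\operatorname{ch}(\pi),\circ)^-:=(\operatorname{ch}(\bar\pi),\circ)$ and $(\operatorname{ch}(\tau),\bullet)^-:=(\operatorname{ch}(\bar\tau),\bullet)$, $\bar\pi$ and $\bar\tau$ the contragredients; it is (trivially) continuous, and $(\varepsilon_x*\varepsilon_y)^-=\varepsilon_{y^-}*\varepsilon_{x^-}$ is checked in the four cases from $\operatorname{res}_{H_0}^H\operatorname{ch}(\bar\pi)=\overline{\operatorname{res}_{H_0}^H\operatorname{ch}(\pi)}$, $\operatorname{ind}_{H_0}^H\operatorname{ch}(\bar\tau)=\overline{\operatorname{ind}_{H_0}^H\operatorname{ch}(\tau)}$ and the fact that in the commutative hypergroups $\mathcal{K}(\hat H)$ and $\mathcal{K}(\widehat{H_0})$ the involution is an anti-automorphism of the convolution. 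For the remaining requirement $e\in\operatorname{supp}(\varepsilon_x*\varepsilon_y)\Leftrightarrow y=x^-$: in the $\circ$-$\circ$ case it is the corresponding property of $\mathcal{K}(\hat H)$; in the two cross cases $\varepsilon_x*\varepsilon_y$ lies entirely in the $\bullet$-copy so $e$ is never in its support, while $x^-$ lies in the opposite copy from $y$, so both sides fail; and in the $\bullet$-$\bullet$ case $e=(\operatorname{ch}(\pi_0),\circ)$ lies in $\operatorname{supp}(\varepsilon_x*\varepsilon_y)$ iff $\operatorname{ch}(\pi_0)$ has positive coefficient in $\operatorname{ind}_{H_0}^H(\operatorname{ch}(\tau_i)\operatorname{ch}(\tau_j))=\sum_k a_k\operatorname{ind}_{H_0}^H\operatorname{ch}(\tau_k)$, which by Frobenius reciprocity for characters (see~\cite{HKY}) occurs iff some $\tau_k=\tau_0$ has $a_k>0$, i.e.\ iff $\tau_0\in\operatorname{supp}(\operatorname{ch}(\tau_i)\operatorname{ch}(\tau_j))$, i.e.\ iff $\tau_j=\bar\tau_i$, i.e.\ iff $y=x^-$.

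The main obstacle I anticipate is exactly this last point together with the $\bullet$-$\bullet$ case of the anti-automorphism identity: both rest on the interplay of $\operatorname{ind}_{H_0}^H$, $\operatorname{res}_{H_0}^H$ and contragredients, i.e.\ on Frobenius reciprocity at the level of (hyper)characters combined with Definition~\ref{Definition3.1}. It is worth emphasizing that admissibility is used only to secure associativity via Proposition~\ref{Proposition3.6}; every other hypergroup axiom holds for an arbitrary pair $(H,H_0)$ of the stated type, which is precisely why Proposition~\ref{Proposition3.7}, recovering admissibility from associativity, completes the equivalence.
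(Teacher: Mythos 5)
Your proposal is correct and follows the same route as the paper: Proposition~\ref{Proposition3.6} supplies associativity for the ``if'' direction, Proposition~\ref{Proposition3.7} gives the ``only if'' direction, and the remaining hypergroup axioms are verified directly (the paper merely asserts these are ``easy to check,'' whereas you spell out (HG1)--(HG3), the neutral element, and the involution case by case). The extra detail on the $\bullet$-$\bullet$ support condition via Frobenius reciprocity is sound and fills in exactly what the paper leaves implicit.
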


\begin{proof} If $(H, H_0)$ is an admissible hypergroup pair, then the associativity relations $(A1)$, $(A2)$, $(A3)$ and $(A4)$ are a~consequence of Proposition~\ref{Proposition3.6}. It is easy to check the remaining axioms of a hypergroup for $\mathcal{K}\big(\hat{H} \cup \widehat{H_0}, \mathbb{Z}_q(2)\big)$. The converse statement follows from Proposition~\ref{Proposition3.7}.
\end{proof}

\begin{Remark}\label{Remark3.9} \quad
\begin{enumerate}\itemsep=0pt
\item[(1)] The above $\mathcal{K}\big(\hat{H} \cup \widehat{H_0}, \mathbb{Z}_q(2)\big)$ is a discrete commutative (at most countable) hypergroup such that the sequence
\begin{gather*}
1 \longrightarrow \mathcal{K}(\hat{H}) \longrightarrow \mathcal{K}\big(\hat{H} \cup \widehat{H_0}, \mathbb{Z}_q(2)\big)
\longrightarrow \mathbb{Z}_q(2) \longrightarrow 1
\end{gather*}
is exact.
\item[(2)] If $H_0 = H$, then $\mathcal{K}\big(\hat{H} \cup \widehat{H_0}, \mathbb{Z}_q(2)\big)$ is the direct product hypergroup $\mathcal{K}(\hat{H}) \times \mathbb{Z}_q(2)$.
\item[(3)] If $H$ is a f\/inite hypergroup and $H_0 = \{ h_0 \}$ where $h_0$ is unit of $H$, then $\mathcal{K}\big(\hat{H} \cup \widehat{H_0}, \mathbb{Z}_q(2)\big)$ is the hypergroup join $\mathcal{K}(\hat{H}) \vee \mathbb{Z}_q(2)$.
\item[(4)] If $H$ is a compact commutative hypergroup of strong type and $H_0$ is a closed subhypergroup of $H$ with $|H/H_0| < + \infty$. Then $(H, H_0)$ is always an admissible hypergroup pair and $\mathcal{K}\big(\hat{H} \cup \widehat{H_0}, \mathbb{Z}_q(2)\big)$ is a hypergroup. For more details see~\cite{HKTY2}.
\end{enumerate}
\end{Remark}

\section{Semi-direct product hypergroups}\label{section4}

We consider a non-commutative compact hypergroup, namely the semi-direct product hypergroup $K:=H\rtimes_{\alpha}G$, where $\alpha$ is an action of a~compact group $G$ on a compact commutative hypergroup $H$ of strong type. For a representation $\pi$ of $K$ we denote the restrictions of $\pi$ to $H$ and $G$ by $\rho$ and $\tau$ respectively. We shall write $\pi=\rho\odot\tau$ expressing
\begin{gather*}\pi(h,g)=\rho(h)\tau(g)\end{gather*}
for all $h\in H$, $g\in G$. The action $\hat{\alpha}$ of $G$ on $\hat{H}$ induced by $\alpha$ is given by
\begin{gather*}\hat{\alpha}_g(\chi)(h):=\chi\big(\alpha_g^{-1}(h)\big),\end{gather*}
whenever $\chi\in \hat{H}$, $g\in G$, $h\in H$. Let
\begin{gather*}G(\chi):=\{g\in G \colon \hat{\alpha}_g(\chi)=\chi\}\end{gather*}
be the stabilizer of $\chi\in \hat{H}$ under the action $\hat{\alpha}$ of $G$ on $\hat{H}$.

\begin{Proposition}\label{Proposition4.1}
Any irreducible representation $\pi$ of $K=H\rtimes_{\alpha}G$ is given by
\begin{gather*}\pi = \pi^{(\chi,\tau)} := \operatorname{ind}_{H\rtimes_{\alpha}G(\chi)}^{H\rtimes_{\alpha}G}(\chi\odot\tau),\end{gather*}
where $\chi\in \hat{H}$, $\tau \in \widehat{G(\chi)}$.
\end{Proposition}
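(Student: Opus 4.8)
The plan is to follow the classical Mackey machine for semi-direct products, adapted to the hypergroup setting of $K = H \rtimes_\alpha G$ where $H$ is a compact commutative hypergroup of strong type and $G$ a compact group. Since $H$ is commutative, its dual $\hat H$ consists of characters, and $K$ is a compact hypergroup, so every irreducible representation $\pi$ of $K$ is finite-dimensional and, up to unitary equivalence, appears in the Peter--Weyl-type decomposition of $L^2(K, \omega_K)$. First I would restrict $\pi$ to the closed subhypergroup $H$; since $H$ is commutative and acts on $\mathcal H(\pi)$, the restriction $\rho = \pi|_H$ decomposes as a direct sum of characters $\chi \in \hat H$, and the set of characters occurring is invariant under the conjugation action of $G$, hence (by irreducibility of $\pi$ and finiteness) is a single $G$-orbit $\{\hat\alpha_g(\chi) : g \in G\}$ for some fixed $\chi \in \hat H$.

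Next I would fix $\chi$ and consider the $\chi$-isotypic subspace $\mathcal H_\chi \subset \mathcal H(\pi)$, i.e.\ the range of the spectral projection picking out the character $\chi$. The subhypergroup $H \rtimes_\alpha G(\chi)$ preserves $\mathcal H_\chi$: elements of $H$ act by the scalar $\chi$ (so $\rho|_{\mathcal H_\chi}$ is $\chi\otimes \mathrm{id}$), and elements of the stabilizer $G(\chi)$ preserve $\chi$ under $\hat\alpha$, hence preserve $\mathcal H_\chi$. This gives a representation of $H \rtimes_\alpha G(\chi)$ on $\mathcal H_\chi$ of the form $\chi \odot \tau$ for some representation $\tau$ of $G(\chi)$; one checks $\tau$ is irreducible, because any $G(\chi)$-invariant subspace of $\mathcal H_\chi$ would generate (by the action of coset representatives of $G(\chi)$ in $G$) a $K$-invariant subspace of $\mathcal H(\pi)$, contradicting irreducibility. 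The key identity to verify here is that the factorization $\pi(h,g) = \rho(h)\tau(g)$ is consistent with the convolution of $K$, which reduces to the compatibility of $\alpha$ with the convolutions of $H$ and $G$ as built into the semi-direct product.

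Then I would identify $\pi$ with $\operatorname{ind}_{H\rtimes_\alpha G(\chi)}^{H\rtimes_\alpha G}(\chi \odot \tau)$ by the Frobenius reciprocity / imprimitivity formalism for induced representations of hypergroups developed in the references cited in the excerpt (see \cite{HK3,HKY}). Concretely: by the definition of $\mathcal H(\operatorname{ind})$ recalled in Section~\ref{section2}, sections of the induced representation are functions on $K$ with values in $\mathcal H_\chi$ obeying the covariance condition, and the map sending $\xi \in \mathcal H(\pi)$ to the section $x \mapsto P_\chi \pi(x)\xi$ (with $P_\chi$ the projection onto $\mathcal H_\chi$) is a $K$-intertwiner; one checks it is unitary up to a constant using invariance of $\omega_H$ on $H/H_0$ with $|H/H_0|<+\infty$ in the present semi-direct product situation, where the relevant coset space $G/G(\chi)$ (equivalently the orbit of $\chi$) is finite since $G$ is compact and the orbit is discrete in $\hat H$. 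Conversely, $\operatorname{ind}_{H\rtimes_\alpha G(\chi)}^{H\rtimes_\alpha G}(\chi \odot \tau)$ is irreducible: its restriction to $H$ is supported exactly on the orbit of $\chi$ with multiplicity $\dim\tau$, and Mackey's irreducibility criterion (the stabilizer of $\chi$ being exactly $G(\chi)$, so the relevant double coset is a single point and the little-group representation $\tau$ is irreducible) forces irreducibility.

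\textbf{Main obstacle.} The delicate step is the orbit-finiteness and the unitarity of the intertwiner: one must be careful that in the hypergroup setting the "orbit" $\{\hat\alpha_g(\chi)\}$ is genuinely a finite set (so that $H \rtimes_\alpha G(\chi)$ really is a finite-index subhypergroup and the induced representation is finite-dimensional), and that the normalization constants from the hyperdimensions $d(\pi_j)$ appearing in the definition of induced characters match up correctly. I would expect this to be handled by invoking compactness of $G$ together with the discreteness of $\hat H$, and the induced-representation bookkeeping from \cite{HKY}, rather than by a direct computation.
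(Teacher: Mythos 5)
Your proposal is correct and follows essentially the same route as the paper: the paper's entire proof is a one-line appeal to the Mackey machine for semi-direct product hypergroups (Theorem~7.1 of~\cite{HK3}), and your sketch---restriction to $H$, single $G$-orbit of characters, isotypic subspace and little-group representation $\tau$ of $G(\chi)$, identification with the induced representation, and Mackey's irreducibility criterion---is precisely the argument that citation encapsulates. The technical points you flag (finiteness of the orbit from compactness of $G$ and discreteness of $\hat H$, unitarity of the intertwiner) are indeed the content delegated to~\cite{HK3} and~\cite{HKY}.
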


\begin{proof} This statement is obtained by an application of the Mackey machine as stated in Theo\-rem~7.1 of~\cite{HK3}.
\end{proof}

We denote the orbit of $\chi \in \hat{H}$ by $O(\chi)$ under the action $\hat{\alpha}$ of $G$ on $\hat{H}$, i.e.,
\begin{gather*}
O(\chi) := \{ \hat{\alpha}_g(\chi) \colon g \in G \}.
\end{gather*}
Then the representation $\rho^{\chi}$ associated with $O(\chi)$ is def\/ined by
\begin{gather*}\rho^{\chi}:= \int_{O(\chi)}^{\oplus} \sigma \mu(d\sigma),\end{gather*}
where $\mu$ is the $\hat{\alpha}$-invariant probability measure supporting the orbit $O(\chi)$ of $\chi$ in $\hat{H}$. We denote the space $\{ \rho^{\chi} \colon \chi \in \hat{H} \}$ by $\hat{H}^{\hat{\alpha}}$. By the assumption that $H$ is strong we see that $\hat{H}^{\hat{\alpha}}$ is a~hypergroup which is called the {\it orbital hypergroup} of $\hat{H}$ under the action $\hat{\alpha}$. Here we note that the irreducible representation $\pi^{(\chi,\tau)}$ of $K=H\rtimes_{\alpha}G$ is written as $\pi^{(\chi,\tau)}=\rho^{\chi}\odot u^{\tau}$, where the representation $u^{\tau}$ of $G$ is given by
\begin{gather*}u^{\tau}=\operatorname{ind}_{G(\chi)}^G \tau. \end{gather*}
Moreover we note that the hyperdimension $d(\pi)$ of $\pi = \pi^{(\chi,\tau)}$ is \begin{gather*}d(\pi):=w(\chi)\dim\pi = w(\rho^\chi) \dim \tau,\end{gather*}
where $w(\chi)$ and $w(\rho^{\chi})=w(\chi)|O(\chi)|$ denote the weights of $\chi\in \hat{H}$ and $\rho^{\chi} \in \hat{H}^{\hat{\alpha}}$ respectively.

\begin{Proposition}\label{Proposition4.2}
For a representation $\pi_0 = \chi \odot \tau$ of $K_0=H\rtimes_{\alpha}G(\chi)$, where $\chi\in \hat{H}$, $\tau \in \widehat{G(\chi)}$, the character of the induced representation $\pi^{(\chi,\tau)} = \operatorname{ind}_{K_0}^K\pi_0$ of $\pi_0$ takes the form
\begin{gather*}
\operatorname{ch}\big({\operatorname{ind}}_{K_0}^K\pi_0\big)(h,g)= \int_G \chi(\alpha_s(h)) \operatorname{ch}(\tau)\big(sgs^{-1}\big) 1_{G(\chi)}\big(sgs^{-1}\big) \omega_G(ds).
\end{gather*}
Moreover,
\begin{gather*}
\big({\operatorname{ind}}_{K_0}^K\operatorname{ch}(\pi_0)\big)(h,g)= \int_G \chi(\alpha_s(h))\operatorname{ch}(\tau)\big(sgs^{-1}\big) 1_{G(\chi)}\big(sgs^{-1}\big) \omega_G(ds),
\end{gather*}
hence
\begin{gather*}
\operatorname{ind}_{K_0}^K\operatorname{ch}(\pi_0) = \operatorname{ch}\big({\operatorname{ind}}_{K_0}^K\pi_0\big)=\operatorname{ch}\big(\pi^{(\chi,\tau)}\big).
\end{gather*}
\end{Proposition}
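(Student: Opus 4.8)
The plan is to compute the character of the induced representation $\pi^{(\chi,\tau)}=\operatorname{ind}_{K_0}^{K}\pi_0$ directly from the definition of induced representation recalled in Section~\ref{section2}, specialized to the semi-direct product hypergroup $K=H\rtimes_{\alpha}G$. First I would fix an orthonormal basis of the representing space $\mathcal{H}(\pi_0)=\mathcal{H}(\tau)$ of $\pi_0=\chi\odot\tau$ (here $\chi$ is one-dimensional since $H$ is commutative of strong type), and use the standard realization of $\mathcal{H}(\pi)$ as the space of $\mathcal{H}(\pi_0)$-valued $L^2$-functions on $K$ that transform on the left under $K_0$ via $\pi_0$; because $|K/K_0|=|G/G(\chi)|<+\infty$ this is a finite direct sum indexed by a transversal $\{s_1,\dots,s_n\}$ of $G(\chi)$ in $G$, and the trace is obtained by summing the diagonal block contributions of $\pi(h,g)$ relative to that transversal.

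The key computation is that the $(i,j)$ block of $\pi(h,g)$ is nonzero only when $s_i g s_j^{-1}\in G(\chi)$ (using that the $H$-part acts by the character $\chi$ pulled back through the action $\alpha$), so the diagonal blocks that survive are exactly those with $s_i g s_i^{-1}\in G(\chi)$, and each contributes $\chi(\alpha_{s_i}(h))\,\operatorname{tr}\tau(s_i g s_i^{-1})$. Passing from the finite sum over the transversal to an integral over $G$ against the normalized Haar measure $\omega_G$ — which is legitimate because the integrand $\chi(\alpha_s(h))\operatorname{ch}(\tau)(sgs^{-1})1_{G(\chi)}(sgs^{-1})$ is constant on left cosets $sG(\chi)$ after the appropriate normalization by $\dim\tau$ and $|G/G(\chi)|$ — yields precisely
\begin{gather*}
\operatorname{ch}\big(\operatorname{ind}_{K_0}^{K}\pi_0\big)(h,g)=\int_G \chi(\alpha_s(h))\,\operatorname{ch}(\tau)\big(sgs^{-1}\big)\,1_{G(\chi)}\big(sgs^{-1}\big)\,\omega_G(ds).
\end{gather*}
For the second displayed formula I would invoke the definition of $\operatorname{ind}_{K_0}^K\operatorname{ch}(\pi_0)$ from the decomposition $\operatorname{ind}_{K_0}^K\pi_0\cong\bigoplus_j\pi_j$ into irreducibles of $K$ together with the hyperdimension weights $d(\pi_j)/d(\pi)$; since here $\operatorname{ind}_{K_0}^K\pi_0=\pi^{(\chi,\tau)}$ is already irreducible by Proposition~\ref{Proposition4.1}, the sum collapses to a single term and the two expressions coincide, giving $\operatorname{ind}_{K_0}^K\operatorname{ch}(\pi_0)=\operatorname{ch}(\operatorname{ind}_{K_0}^K\pi_0)=\operatorname{ch}(\pi^{(\chi,\tau)})$.

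The main obstacle I expect is the bookkeeping in the trace computation: one must verify carefully that the normalization constants coming from the hypergroup Haar measure $\omega_H$ (implicit in the $L^2(H)$-factor of $\mathcal{H}(\pi)$), the hyperdimension $d(\pi)=w(\chi)\dim\pi$ versus the ordinary dimension $\dim\pi=|O(\chi)|\dim\tau$, and the measure of the fibers $G(\chi)$ in $G$ all combine correctly so that the discrete sum over the transversal becomes exactly the stated integral against $\omega_G$ — in particular that no stray factor of $|O(\chi)|$ or $w(\chi)$ survives. The identification $\pi^{(\chi,\tau)}=\rho^\chi\odot u^\tau$ with $u^\tau=\operatorname{ind}_{G(\chi)}^G\tau$ recorded just before the statement is what makes the group-theoretic part of this transparent, so I would lean on it to reduce the hypergroup induction to the classical Frobenius-type character formula for $u^\tau$ on $G$ twisted by the $H$-character, rather than redoing the analysis from scratch.
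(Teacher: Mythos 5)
Your plan is sound, but it takes a genuinely different route from the paper: the paper's entire proof is a citation of the general character formulae for induced representations of compact hypergroups (Proposition~4.3 and Theorem~4.5 of~\cite{HKY}), specialized to $K_0=H\rtimes_\alpha G(\chi)\subset K=H\rtimes_\alpha G$, whereas you re-derive the formula from scratch by realizing $\mathcal{H}(\pi)$ over a transversal of $G(\chi)$ in $G$ and summing the surviving diagonal blocks. Your route is more elementary and self-contained, and in fact the bookkeeping you worry about is benign: since $\operatorname{ch}(\pi)=\frac{1}{\dim\pi}\operatorname{tr}\pi$ is normalized by the \emph{ordinary} dimension and $\omega_G$ is normalized Haar measure, the factor $|G/G(\chi)|$ from the transversal cancels exactly against $\dim\pi=|O(\chi)|\dim\tau$, and neither $w(\chi)$ nor any hyperdimension enters the first formula at all (hyperdimensions only appear in the definition of $\operatorname{ind}_{K_0}^K\operatorname{ch}(\pi_0)$, where your irreducibility argument makes them cancel trivially). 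Two points deserve explicit justification in a write-up: (i) the transversal realization of the induced module, which in the hypergroup setting is not immediate from the $L^2(K,\mathcal{H}(\pi_0))$ definition of Section~\ref{section2} and should be grounded in the imprimitivity theorem of~\cite{HK3} or, as you suggest, in the recorded factorization $\pi^{(\chi,\tau)}=\rho^\chi\odot u^\tau$ with $u^\tau=\operatorname{ind}_{G(\chi)}^G\tau$, which reduces everything to the classical Frobenius formula twisted by the orbit of $\chi$; and (ii) the irreducibility of $\pi^{(\chi,\tau)}$, which you attribute to Proposition~\ref{Proposition4.1} --- that proposition literally asserts only that every irreducible representation of $K$ arises this way, so you should invoke the full Mackey machine (Theorem~7.1 of~\cite{HK3}) for the converse; also note the integrand is constant on right cosets $G(\chi)s$ rather than left cosets, a harmless relabeling. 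What the paper's citation buys is generality (it covers inductions where the induced representation is reducible and the hyperdimension weights genuinely matter); what your argument buys is transparency in the case actually needed here.
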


\begin{proof} By an application of the character formulae as proved in Proposition~4.3 and Theorem~4.5 of~\cite{HKY}, we obtain the desired formulae.
\end{proof}

\begin{Proposition}[{\cite[Theorem 4.6]{HKY}}]\label{Proposition4.3}
Let $H$ be a finite commutative hypergroup of strong type. Then the induced representation $\pi=\operatorname{ind}_G^{H\rtimes_{\alpha}G}\tau$ of an irreducible representation $\tau$ of $G$ to $H\rtimes_\alpha G$ is finite-dimensional, and it is decomposed as
\begin{gather*}\pi\cong\sum_{\rho^{\chi} \in \hat{H}^{\hat{\alpha}}}\!\!{}^{\oplus}
\pi^{(\chi,\tau)}.
\end{gather*}
The character $\operatorname{ch}(\pi)$ of $\pi$ is
\begin{gather*}\operatorname{ch}(\pi)=\sum_{\rho^{\chi}\in \hat{H}^{\hat{\alpha}}} \frac{w(\rho^{\chi})}{w(\hat{H})} \operatorname{ch}\big(\pi^{(\chi,\tau)}\big).
\end{gather*}
\end{Proposition}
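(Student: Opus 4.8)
The plan is to deduce the direct-sum decomposition from the Mackey description of $\hat{K}$ in Proposition~\ref{Proposition4.1} together with Frobenius reciprocity, and then to obtain the character formula from the hyperdimension data collected above. Finite-dimensionality is clear at the outset: since $H$ is finite one has $|K/G| = |H| < +\infty$, so the representing space of $\operatorname{ind}_G^{K}\tau$ (which, by the equivariance condition in the definition of an induced representation, consists of $\mathcal{H}(\tau)$-valued functions on the finite space $K/G$) is finite-dimensional.

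For the decomposition I would invoke Frobenius reciprocity for the compact hypergroup pair $(K,G)$, in the form established in~\cite{HK3}: the multiplicity of an irreducible $\pi \in \hat{K}$ in $\operatorname{ind}_G^{K}\tau$ equals the multiplicity of $\tau$ in $\operatorname{res}_G^{K}\pi$. By Proposition~\ref{Proposition4.1}, $\pi = \pi^{(\chi,\tau')} = \rho^{\chi}\odot u^{\tau'}$ for some $\chi \in \hat{H}$ and $\tau' \in \widehat{G(\chi)}$; since $G$ is identified with the subhypergroup $\{e\}\rtimes_{\alpha}G$ of $K$ and $\rho^{\chi}(e)=1$, the restriction of $\pi^{(\chi,\tau')}$ to $G$ is precisely $u^{\tau'} = \operatorname{ind}_{G(\chi)}^{G}\tau'$, and by ordinary Frobenius reciprocity on the compact group $G$ this multiplicity equals the multiplicity of $\tau'$ in $\operatorname{res}_{G(\chi)}^{G}\tau$. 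Summing over the parametrization of $\hat{K}$ by the orbits $O(\chi) \leftrightarrow \rho^{\chi} \in \hat{H}^{\hat{\alpha}}$ and over $\tau' \in \widehat{G(\chi)}$, and using additivity of induction together with $\pi^{(\chi,\tau')} = \operatorname{ind}_{H\rtimes_{\alpha}G(\chi)}^{K}(\chi\odot\tau')$, one obtains
\begin{gather*}
\operatorname{ind}_G^{K}\tau \cong \bigoplus_{\rho^{\chi}\in\hat{H}^{\hat{\alpha}}} \operatorname{ind}_{H\rtimes_{\alpha}G(\chi)}^{K}\bigl(\chi\odot\operatorname{res}_{G(\chi)}^{G}\tau\bigr) = \sum_{\rho^{\chi}\in\hat{H}^{\hat{\alpha}}}{}^{\oplus}\pi^{(\chi,\tau)},
\end{gather*}
which is the asserted decomposition, with the convention (used throughout the paper) that $\pi^{(\chi,\,\cdot\,)}$ is extended additively to $\operatorname{res}_{G(\chi)}^{G}\tau$.

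The character formula is then formal. The character of a finite direct sum $\bigoplus_i \pi_i$ of representations of the compact hypergroup $K$ is, by the hyperdimension normalization used for induced characters in Section~\ref{section3},
\begin{gather*}
\operatorname{ch}\Bigl(\bigoplus_i\pi_i\Bigr) = \sum_i \frac{d(\pi_i)}{\sum_j d(\pi_j)}\,\operatorname{ch}(\pi_i).
\end{gather*}
Applying this to the decomposition above, inserting the hyperdimension formula $d(\pi^{(\chi,\tau)}) = w(\rho^{\chi})\dim\tau$ recorded before Proposition~\ref{Proposition4.2}, and using
\begin{gather*}
d(\pi) = \sum_{\rho^{\chi}\in\hat{H}^{\hat{\alpha}}}d\bigl(\pi^{(\chi,\tau)}\bigr) = (\dim\tau)\sum_{\rho^{\chi}\in\hat{H}^{\hat{\alpha}}}w(\rho^{\chi})\\
= (\dim\tau)\sum_{O(\chi)}w(\chi)\,|O(\chi)| = (\dim\tau)\,w(\hat{H}),
\end{gather*}
one finds $d(\pi^{(\chi,\tau)})/d(\pi) = w(\rho^{\chi})/w(\hat{H})$, which gives the claimed formula for $\operatorname{ch}(\pi)$.

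The step I expect to be the main obstacle is the first use of Frobenius reciprocity, since for compact hypergroups one must be sure it holds with genuine integer multiplicities, so that the weights $w(\rho^{\chi})$ enter the final answer only through the hyperdimensions $d(\pi^{(\chi,\tau)})$ and not through the multiplicities appearing in the sum. A concrete alternative that avoids this point is to realize $\mathcal{H}(\operatorname{ind}_G^{K}\tau) \cong L^2(H)\otimes\mathcal{H}(\tau)$ explicitly, diagonalize $L^2(H)$ by the characters of the finite commutative hypergroup $H$, note that $\hat{\alpha}$ permutes the resulting lines along the orbits $O(\chi)$, and identify the block attached to each orbit with $\pi^{(\chi,\tau)}$ by an induction-in-stages computation, the character formula then following from Proposition~\ref{Proposition4.2}; in that route the block identification is the laborious step.
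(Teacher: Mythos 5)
The paper does not actually prove this proposition: it is imported verbatim as Theorem~4.6 of~\cite{HKY}, so there is no in-paper argument to compare yours against. Taken on its own, your proof follows the expected route and is sound in outline: finite-dimensionality is immediate from $|K/G|=|H|<+\infty$, the orbit-block decomposition is the Mackey analysis of Proposition~\ref{Proposition4.1}, and the dimension count $\sum_{O(\chi)}|O(\chi)|\dim\tau=|H|\dim\tau$ confirms that the constituents you find exhaust $\pi$. Two points need more care than you give them. First, as you yourself flag, integer-multiplicity Frobenius reciprocity is delicate for hypergroups; here it is harmless only because $G$ is an honest group and $K/G\cong H$ is finite, so your ``concrete alternative'' (realizing $\mathcal{H}(\pi)\cong L^2(H)\otimes\mathcal{H}(\tau)$, diagonalizing $L^2(H)$ by the characters of the finite commutative hypergroup $H$, and letting $\hat{\alpha}$ permute the resulting lines along the orbits $O(\chi)$) is really the argument rather than a fallback, and it is the one supported by \cite{HK3,HKY}. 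Second, your ``formal'' character step is not formal: $\operatorname{ch}$ is defined in the paper as the normalized trace, and the normalized trace of a direct sum is weighted by ordinary dimensions, which here would give coefficients $|O(\chi)|/|\hat{H}|$ rather than the asserted $w(\rho^{\chi})/w(\hat{H})=w(\chi)|O(\chi)|/w(\hat{H})$; these differ unless all weights of $\hat{H}$ are equal. That the hyperdimension-weighted combination is nevertheless the correct value of $\operatorname{ch}\big(\operatorname{ind}_G^{K}\tau\big)$ is exactly the content of the identity $\operatorname{ind}\operatorname{ch}=\operatorname{ch}\circ\operatorname{ind}$ in Proposition~\ref{Proposition4.2} (i.e., \cite[Theorem~4.5]{HKY}), so you should invoke that proposition explicitly at this step instead of appealing to ``the hyperdimension normalization''. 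With those two points made precise, your weight computation $d\big(\pi^{(\chi,\tau)}\big)/d(\pi)=w(\rho^{\chi})/w(\hat{H})$ is correct and yields the stated formula.
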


\begin{Definition}[\cite{HK4}]\label{Definition4.4}
The action $\alpha$ of $G$ on $H$ is said to satisfy the {\it regularity condition} (or is called regular) provided
\begin{gather*}
G(\chi_i) \cap G(\chi_j) \subset G(\chi_k)
\end{gather*}
for all $\chi_k \in \hat{H}$ such that $\chi_k \in \operatorname{supp}(\delta_{\chi_i} \hat{*} \delta_{\chi_j})$ whenever $\chi_i, \chi_j \in \hat{H}$, $k,i,j \in \{0,1,\dots, n\}$ and $\hat{*}$ symbolizes the convolution on~$\hat{H}$.
\end{Definition}

\begin{Lemma}[{\cite[Lemma 3.1]{HK4}}]\label{Lemma4.5}
If the action $\alpha$ satisfies the regularity condition, then the character set $\mathcal{K}\big(\widehat{H \rtimes_\alpha G}\big)$ of the semi-direct product hypergroup~$H \rtimes_\alpha G$ is a commutative hypergroup.
\end{Lemma}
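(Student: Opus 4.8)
The plan is to show that the regularity condition forces $K=H\rtimes_\alpha G$ to be of strong type; commutativity of $\mathcal{K}(\widehat{K})$ is then immediate, since the convolution of a character hypergroup corresponds to pointwise multiplication of the functions $\operatorname{ch}(\pi)$, which is commutative. By Proposition~\ref{Proposition4.1} the irreducible representations of $K$ are exactly the Mackey representations $\pi^{(\chi,\tau)}=\operatorname{ind}_{K(\chi)}^{K}(\chi\odot\tau)$ with $\chi\in\hat{H}$, $\tau\in\widehat{G(\chi)}$, where $K(\chi):=H\rtimes_\alpha G(\chi)$. So everything reduces to decomposing the tensor product $\pi^{(\chi_i,\tau_i)}\otimes\pi^{(\chi_j,\tau_j)}$ into such representations with non-negative multiplicities.

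For this I would invoke the Mackey tensor-product (subgroup) theorem in its hypergroup form, available through the same Mackey machinery that underlies Proposition~\ref{Proposition4.1} (cf.~\cite{HK3}). Since $H$ is normal in $K$, the double coset space $K(\chi_i)\backslash K/K(\chi_j)$ is parametrised by $G(\chi_i)\backslash G/G(\chi_j)$, and for a representative $x\in G$ one has $K(\chi_i)\cap xK(\chi_j)x^{-1}=H\rtimes_\alpha\big(G(\chi_i)\cap xG(\chi_j)x^{-1}\big)$. On the $H$-factor, the $x$-component carries the product character $\chi_i\cdot\hat{\alpha}_x\chi_j$, which by strong type of the commutative hypergroup $H$ decomposes as $\int_{\hat{H}}\eta\,\big(\delta_{\chi_i}\hat{*}\delta_{\hat{\alpha}_x\chi_j}\big)(d\eta)$; let $\chi_k$ range over its finite support. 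The regularity condition, applied to the pair $\big(\chi_i,\hat{\alpha}_x\chi_j\big)$, gives $G(\chi_i)\cap xG(\chi_j)x^{-1}=G(\chi_i)\cap G\big(\hat{\alpha}_x\chi_j\big)\subset G(\chi_k)$ for each such $\chi_k$. This inclusion is precisely what allows induction in stages through $K(\chi_k)$: using that $\chi_k$ is $G(\chi_k)$-invariant one strips off the $H$-factor, and the contribution of the double coset $x$ together with $\chi_k$ becomes $\operatorname{ind}_{K(\chi_k)}^{K}\big(\chi_k\odot\operatorname{ind}_{G(\chi_i)\cap xG(\chi_j)x^{-1}}^{G(\chi_k)}(\tau_i\otimes{}^{x}\tau_j)\big)$. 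Decomposing the inner induction over $\widehat{G(\chi_k)}$ --- a classical compact-group statement with non-negative multiplicities --- exhibits $\pi^{(\chi_i,\tau_i)}\otimes\pi^{(\chi_j,\tau_j)}$ as a non-negative sum of the $\pi^{(\chi_k,\tau_k)}$. The same conclusion can be reached more computationally by multiplying the two explicit character integrals of Proposition~\ref{Proposition4.2}, substituting $t=su$ in the double $G$-integration, expanding the product of the translated $H$-characters via $\hat{*}$, and then using regularity to recognise the remaining $G$-integral as $\sum_k a_k\operatorname{ch}\big(\pi^{(\chi_k,\tau_k)}\big)$ with all $a_k\geq 0$.

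Granting this decomposition, the hypergroup axioms for $\mathcal{K}(\widehat{K})$ follow routinely. The product $\operatorname{ch}\big(\pi^{(\chi_i,\tau_i)}\big)\operatorname{ch}\big(\pi^{(\chi_j,\tau_j)}\big)$ is a finite non-negative combination of characters whose coefficients sum to $1$ (evaluate at the identity of $K$ and use $\operatorname{tr}(\pi\otimes\pi')=\operatorname{tr}(\pi)\operatorname{tr}(\pi')$), so $\delta_\pi*\delta_{\pi'}\in M_c^1\big(\mathcal{K}(\widehat{K})\big)$, which is (HG1); the neutral element is the character of the trivial representation, the involution is $\operatorname{ch}(\pi)\mapsto\overline{\operatorname{ch}(\pi)}=\operatorname{ch}(\bar{\pi})$, and the trivial character belongs to $\operatorname{supp}(\delta_\pi*\delta_{\pi'})$ exactly when $\pi'\cong\bar{\pi}$, which is (HG2); (HG3) is automatic since $\widehat{K}$ is discrete; associativity is inherited from associativity of $\otimes$; and commutativity from the observation above. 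The main obstacle is the middle paragraph: setting up the hypergroup Mackey tensor-product decomposition correctly and tracking the orbit weights $w(\chi)$, $|O(\chi)|$ (equivalently the hyperdimensions $d\big(\pi^{(\chi,\tau)}\big)$) through induction in stages, with the regularity condition doing exactly the job of guaranteeing that no irreducible constituent of the tensor product falls outside the family $\{\pi^{(\chi_k,\tau_k)}\}$.
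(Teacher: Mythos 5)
First, a point of reference: the paper does not prove Lemma~\ref{Lemma4.5} at all --- it is imported verbatim from \cite[Lemma~3.1]{HK4} --- so there is no in-paper proof to compare yours against; what follows is an assessment of your proposal on its own terms. The genuine gap is in your primary route, which runs through the tensor product $\pi^{(\chi_i,\tau_i)}\otimes\pi^{(\chi_j,\tau_j)}$ of representations of $K=H\rtimes_\alpha G$ and a hypergroup form of Mackey's tensor-product theorem. But $K$ is a hypergroup, not a group, and the tensor product of two hypergroup representations is in general \emph{not} a representation: for representations $\rho_1,\rho_2$ of $H$ one has $(\rho_1\otimes\rho_2)(\varepsilon_h*\varepsilon_{h'})=\int_H\rho_1(z)\otimes\rho_2(z)\,(\varepsilon_h*\varepsilon_{h'})(dz)$, which does not factor as $(\rho_1\otimes\rho_2)(h)\,(\rho_1\otimes\rho_2)(h')$. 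This is precisely why ``strong type'' is an \emph{assumption} on $H$ rather than a theorem, and why the convolution on $\mathcal{K}(\hat{K})$ must be built from the pointwise product of normalized character \emph{functions} rather than from $\otimes$. The double-coset decomposition $K(\chi_i)\backslash K/K(\chi_j)$, induction in stages through $K(\chi_k)$, and ``associativity inherited from $\otimes$'' are all group-theoretic tools that have not been set up for $K$, so the first part of your middle paragraph does not go through as written.

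Your fallback at the end of that paragraph is the viable argument and should be promoted to the main one: multiply the two explicit character integrals of Proposition~\ref{Proposition4.2}, observe that the integrand contains the pointwise product $(\hat{\alpha}_s^{-1}\chi_i)(\hat{\alpha}_t^{-1}\chi_j)$ of two characters of the commutative strong hypergroup $H$, expand it via $\hat{*}$ with nonnegative coefficients, and use regularity exactly as you state it, $G(\chi_i)\cap G(\hat{\alpha}_x\chi_j)\subset G(\chi_k)$ for every $\chi_k$ in the support, to guarantee that each resulting term is again of the form $\operatorname{ch}\big(\pi^{(\chi_k,\tau_k)}\big)$; you have correctly located the one place where regularity is indispensable. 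To finish you would still need to (i) check that the coefficients produced by the remaining $G$-integration are nonnegative and, by evaluation at the unit of $K$, sum to~$1$, tracking the weights $w(\chi_k)$ and $|O(\chi_k)|$ as in the computation in Proposition~\ref{Proposition4.6}(3), and (ii) justify the support condition in (HG2) via the weighted orthogonality relations for characters of a compact hypergroup in the sense of Vrem~\cite{V}, which you assert but do not prove.
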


For $g\in G$, put
\begin{gather*}\hat{H}(g) :=\big\{\sigma\in \hat{H} \colon g\in G(\sigma)\big\} = \big\{\sigma \in \hat{H} \colon \hat{\alpha}_g(\sigma) = \sigma \big\}.\end{gather*}

\begin{Proposition}\label{Proposition4.6} Let $H$ be a finite commutative hypergroup of strong type and~$G$ a compact group. Assume that the action~$\alpha$ of $G$ on $H$ satisfies the regularity condition. Then the followings hold:
\begin{enumerate}\itemsep=0pt
\item[$(1)$] $\hat{H}(g)$ is a subhypergroup of $\hat{H}$,
\item[$(2)$] for $g, t \in G$ the condition $tgt^{-1} \in G(\sigma)$ implies that $\hat{\alpha}^{-1}_t(\sigma) \in \hat{H}(g)$,
\item[$(3)$] for $\tau \in \hat{G}$
\begin{gather*}\big({\operatorname{ind}}_G^{H\rtimes_{\alpha}G}\operatorname{ch}(\tau)\big)(h,g) = \frac{w(\hat{H}(g))}{w(\hat{H})} \omega_{\hat{H}(g)}(h) \cdot \operatorname{ch}(\tau)(g),\end{gather*}
where $\omega_{\hat{H}(g)}$ is the normalized Haar measure of $\hat{H}(g)$,
\item[$(4)$] for $\tau \in \hat{G}$
\begin{gather*}\operatorname{res}_G^{H\rtimes_{\alpha}G}\big({\operatorname{ind}}_G^{H\rtimes_{\alpha}G}\operatorname{ch}(\tau)\big)(g)
= \frac{w(\hat{H}(g))}{w(\hat{H})} \cdot \operatorname{ch}(\tau)(g).\end{gather*}
\end{enumerate}
\end{Proposition}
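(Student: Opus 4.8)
\textbf{Proof plan for Proposition~\ref{Proposition4.6}.}
The plan is to treat the four assertions as a cascade, each feeding into the next, with the character formula in~(3) being the computational heart of the matter. First I would establish~(1): to see that $\hat{H}(g)$ is a subhypergroup, note that $\hat{H}(g) = \{\sigma \in \hat{H} \colon \hat{\alpha}_g(\sigma) = \sigma\}$ is precisely the fixed-point set of the hypergroup automorphism $\hat{\alpha}_g$ of the finite commutative strong hypergroup $\hat{H}$. Since $\hat{\alpha}_g$ preserves the convolution $\hat{*}$, the involution, and the unit, its fixed-point set is closed under all these operations; the only point needing the regularity condition is that $\hat{H}(g)$ is closed under taking supports of convolution products, i.e., if $\chi_i, \chi_j \in \hat{H}(g)$ and $\chi_k \in \operatorname{supp}(\delta_{\chi_i}\hat{*}\delta_{\chi_j})$ then $\chi_k \in \hat{H}(g)$, equivalently $g \in G(\chi_k)$. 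But $g \in G(\chi_i)\cap G(\chi_j)$ by hypothesis, and regularity (Definition~\ref{Definition4.4}) gives $G(\chi_i)\cap G(\chi_j)\subset G(\chi_k)$, which is exactly what is needed. Assertion~(2) is then a short direct computation: $tgt^{-1}\in G(\sigma)$ means $\hat{\alpha}_{tgt^{-1}}(\sigma)=\sigma$, i.e.\ $\hat{\alpha}_t\hat{\alpha}_g\hat{\alpha}_t^{-1}(\sigma)=\sigma$, hence $\hat{\alpha}_g(\hat{\alpha}_t^{-1}(\sigma))=\hat{\alpha}_t^{-1}(\sigma)$, which says precisely $\hat{\alpha}_t^{-1}(\sigma)\in \hat{H}(g)$.

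For~(3), which I expect to be the main obstacle, the plan is to combine the decomposition of $\operatorname{ind}_G^{H\rtimes_\alpha G}\tau$ from Proposition~\ref{Proposition4.3} with the explicit character formula of Proposition~\ref{Proposition4.2}. Writing $\pi = \operatorname{ind}_G^{H\rtimes_\alpha G}\tau \cong \sum_{\rho^\chi\in\hat{H}^{\hat\alpha}}^{\oplus}\pi^{(\chi,\tau)}$, Proposition~\ref{Proposition4.3} gives
\begin{gather*}
\operatorname{ch}(\pi)(h,g) = \sum_{\rho^\chi\in\hat{H}^{\hat\alpha}} \frac{w(\rho^\chi)}{w(\hat{H})}\operatorname{ch}\big(\pi^{(\chi,\tau)}\big)(h,g),
\end{gather*}
and by Proposition~\ref{Proposition4.2} (with $G(\chi)$ in the role of the subgroup) each term is
\begin{gather*}
\operatorname{ch}\big(\pi^{(\chi,\tau)}\big)(h,g) = \int_G \chi(\alpha_s(h))\operatorname{ch}(\tau)\big(sgs^{-1}\big)1_{G(\chi)}\big(sgs^{-1}\big)\,\omega_G(ds).
\end{gather*}
Substituting and pulling the $\rho^\chi$-sum through the integral, I would reorganize the sum over orbits $\rho^\chi$ together with the internal averaging over $G/G(\chi)$ implicit in $\rho^\chi = \int_{O(\chi)}^{\oplus}\sigma\,\mu(d\sigma)$ into a single integral over $G$ against $\omega_G$ running over all of $\hat{H}$; the weight bookkeeping $w(\rho^\chi) = w(\chi)|O(\chi)|$ is exactly what makes this collapse consistent. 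The upshot should be
\begin{gather*}
\operatorname{ch}(\pi)(h,g) = \frac{1}{w(\hat{H})}\sum_{\sigma\in\hat{H}} w(\sigma)\int_G \sigma(\alpha_s(h))\,\operatorname{ch}(\tau)\big(sgs^{-1}\big)1_{G(\chi)}\big(sgs^{-1}\big)\,\omega_G(ds),
\end{gather*}
up to the precise indexing. Now assertion~(2) is the key to handling the indicator: $1_{G(\sigma)}(sgs^{-1})=1$ iff $sgs^{-1}\in G(\sigma)$ iff $\hat{\alpha}_s^{-1}(\sigma)\in\hat{H}(g)$, so after the substitution $\sigma\mapsto\hat\alpha_s(\sigma)$ inside the $s$-integral (which $\hat\alpha$-invariance of the weights permits) the condition becomes simply $\sigma\in\hat{H}(g)$, independent of $s$. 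The $s$-integral of $\operatorname{ch}(\tau)(sgs^{-1})$ over $G$ equals $\operatorname{ch}(\tau)(g)$ since $\operatorname{ch}(\tau)$ is a class function on the group $G$, and $\frac{1}{w(\hat{H})}\sum_{\sigma\in\hat{H}(g)}w(\sigma)\sigma(h) = \frac{w(\hat{H}(g))}{w(\hat{H})}\,\omega_{\hat{H}(g)}(h)$ by the standard Plancherel-type identity expressing the Haar measure of the finite hypergroup $\hat{H}(g)$ via its own characters (i.e.\ the orthogonality relation $\sum w(\sigma)\sigma = w\cdot\omega$ on a finite commutative hypergroup). This yields formula~(3).

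Finally, assertion~(4) follows from~(3) by restriction: $\operatorname{res}_G^{H\rtimes_\alpha G}\big(\operatorname{ind}_G^{H\rtimes_\alpha G}\operatorname{ch}(\tau)\big)(g)$ is obtained by evaluating the character of $\operatorname{ind}_G^{H\rtimes_\alpha G}\tau$ on the subhypergroup $G$, i.e.\ at points $(h,g)$ with $h = e_H$ the unit of $H$. Setting $h=e_H$ in~(3) and using $\omega_{\hat{H}(g)}(e_H) = 1$ (the Haar measure of a hypergroup evaluates any character at the unit to $1$, since characters are normalized) immediately gives
\begin{gather*}
\operatorname{res}_G^{H\rtimes_\alpha G}\big({\operatorname{ind}}_G^{H\rtimes_\alpha G}\operatorname{ch}(\tau)\big)(g) = \frac{w(\hat{H}(g))}{w(\hat{H})}\cdot\operatorname{ch}(\tau)(g).
\end{gather*}
The delicate points to get right are the precise normalization conventions in passing between sums over $\hat{H}$, sums over the orbital hypergroup $\hat{H}^{\hat\alpha}$, and integrals over $G$ against $\omega_G$ — in particular making sure the weights $w(\chi)$, $w(\rho^\chi) = w(\chi)|O(\chi)|$, and the measure of each orbit $O(\chi)$ are tracked consistently so that the regrouping in step~(3) is an identity and not merely a proportionality; everything else is routine given Propositions~\ref{Proposition4.2} and~\ref{Proposition4.3}.
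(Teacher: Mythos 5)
Your proposal is correct and takes essentially the same route as the paper's proof: fixed points of $\hat{\alpha}_g$ plus the regularity condition for (1), the same conjugation computation for (2), the combination of Propositions~\ref{Proposition4.2} and~\ref{Proposition4.3} with the class-function property of $\operatorname{ch}(\tau)$, the reindexing via (2), and the identity $\sum_{\sigma\in\hat{H}(g)}\frac{w(\sigma)}{w(\hat{H}(g))}\sigma=\omega_{\hat{H}(g)}$ for (3), and evaluation at the unit of $H$ for (4). The weight bookkeeping $w(\rho^\chi)=w(\chi)|O(\chi)|$ that you flag as the delicate point is exactly the step the paper carries out, so no gap remains.
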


\begin{proof} (1) It is clear that $\sigma^- \in \hat{H}(g)$ for $\sigma \in \hat{H}(g)$. We show that $\operatorname{supp}(\sigma_i \hat{*} \sigma_j) \subset \hat{H}(g)$ for $\sigma_i, \sigma_j \in \hat{H}(g)$. The condition $\sigma_i, \sigma_j \in \hat{H}(g)$ implies that $g \in G(\sigma_i) \cap G(\sigma_j)$. Take $\sigma \in  \operatorname{supp}(\sigma_i \hat{*} \sigma_j)$. Then by the regularity condition
\begin{gather*} G(\sigma_i) \cap G(\sigma_j) \subset G(\sigma)\end{gather*}
we see that $g \in G(\sigma)$ which implies that $\sigma \in \hat{H}(g)$.

(2) By the condition $tgt^{-1} \in G(\sigma)$ we see that $\hat{\alpha}_{tgt^{-1}}(\sigma) = \sigma$.
Then we obtain
\begin{gather*}\hat{\alpha}_{t}(\hat{\alpha}_{g}(\hat{\alpha}_{t^{-1}}(\sigma))) = \sigma,\end{gather*}
so that
\begin{gather*}\hat{\alpha}_{g}\big(\hat{\alpha}^{-1}_t(\sigma)\big)= {\hat{\alpha}_t}^{-1}(\sigma),\end{gather*}
which means that $\hat{\alpha_{t}}^{-1}(\sigma) \in \hat{H}(g)$.

(3) Applying Propositions~\ref{Proposition4.2} and~\ref{Proposition4.3} we obtain for $\tau \in \hat{G}$
\begin{gather*}
\big({\operatorname{ind}}^{H\rtimes_{\alpha}G}_G \operatorname{ch}(\tau)\big)(h,g) =
\sum_{\rho^{\chi}\in \hat{H}^{\hat{\alpha}}}\frac{w(\rho^{\chi})}{w(\hat{H})}\operatorname{ch}\big(\pi^{(\chi,\tau)}\big)(h,g)\\
\hphantom{\big({\operatorname{ind}}^{H\rtimes_{\alpha}G}_G \operatorname{ch}(\tau)\big)(h,g) }{} =
\sum_{\rho^{\chi}\in \hat{H}^{\hat{\alpha}}}\frac{w(\rho^{\chi})}{w(\hat{H})}
\int_G\chi(\alpha_s(h))\operatorname{ch}(\tau)\big(sgs^{-1}\big)1_{G(\chi)}\big(sgs^{-1}\big)d\omega_G(s)\\
 \hphantom{\big({\operatorname{ind}}^{H\rtimes_{\alpha}G}_G \operatorname{ch}(\tau)\big)(h,g) }{}=
\sum_{\rho^{\chi}\in \hat{H}^{\hat{\alpha}}}\frac{w(\rho^{\chi})}{w(\hat{H})}
\int_G\chi(\alpha_s(h))1_{G(\chi)}\big(sgs^{-1}\big)d\omega_G(s)\cdot \operatorname{ch}(\tau)(g) \\
\hphantom{\big({\operatorname{ind}}^{H\rtimes_{\alpha}G}_G \operatorname{ch}(\tau)\big)(h,g) }{}=
\sum_{\rho^{\chi}\in \hat{H}^{\hat{\alpha}}}\frac{|O(\chi)|w(\chi)}{w(\hat{H})}
\cdot\frac{1}{|O(\chi)|} \sum_{\sigma\in O(\chi)}\sigma(h)1_{G(\sigma)}(g)\cdot \operatorname{ch}(\tau)(g) \\
\hphantom{\big({\operatorname{ind}}^{H\rtimes_{\alpha}G}_G \operatorname{ch}(\tau)\big)(h,g) }{} =
\sum_{\sigma\in \widehat{H}}\frac{w(\sigma)}{w(\hat{H})}\sigma(h)1_{G(\sigma)}(g)\cdot \operatorname{ch}(\tau)(g) \\
 \hphantom{\big({\operatorname{ind}}^{H\rtimes_{\alpha}G}_G \operatorname{ch}(\tau)\big)(h,g) }{}=
\frac{w(\hat{H}(g))}{w(\hat{H})}\sum_{\sigma\in \hat{H}(g)}\frac{w(\sigma)}{w(\hat{H}(g))}\sigma(h)\cdot \operatorname{ch}(\tau)(g) \\
\hphantom{\big({\operatorname{ind}}^{H\rtimes_{\alpha}G}_G \operatorname{ch}(\tau)\big)(h,g) }{}=
\frac{w(\hat{H}(g))}{w(\hat{H})} \omega_{\hat{H}(g)}(h) \cdot \operatorname{ch}(\tau)(g).
\end{gather*}

(4) For $\tau \in \hat{G}$
\begin{gather*}
 \operatorname{res}_G^{H\rtimes_{\alpha}G}\big({\operatorname{ind}}_G^{H\rtimes_{\alpha}G}\operatorname{ch}(\tau)\big)(g) = \big({\operatorname{ind}}_G^{H\rtimes_{\alpha}G}\operatorname{ch}(\tau)\big)(h_0,g)\\
\hphantom{\operatorname{res}_G^{H\rtimes_{\alpha}G}\big({\operatorname{ind}}_G^{H\rtimes_{\alpha}G}\operatorname{ch}(\tau)\big)(g)}{} = \frac{w(\hat{H}(g))}{w(\hat{H})} \omega_{\hat{H}(g)}(h_0) \cdot \operatorname{ch}(\tau)(g) = \frac{w(\hat{H}(g))}{w(\hat{H})} \cdot \operatorname{ch}(\tau)(g).\tag*{\qed}
\end{gather*}\renewcommand{\qed}{}
\end{proof}

\begin{Theorem}\label{Theorem4.7} Let $H$ be a finite commutative hypergroup of strong type and $G$ a compact group. Suppose that the action~$\alpha$ of~$G$ on~$H$ satisfies the regularity condition. Then the pair $(H\rtimes_{\alpha}G, G)$ is an admissible hypergroup pair and $\mathcal{K}\big(\widehat{H\rtimes_{\alpha}G} \cup \hat{G},\mathbb{Z}_q(2)\big)$ is a discrete commutative hypergroup.
\end{Theorem}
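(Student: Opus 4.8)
The plan is to derive the assertion from Theorem~\ref{Theorem3.8}. Put $K:=H\rtimes_{\alpha}G$. First one checks the hypotheses of that theorem for the pair $(K,G)$: $K$ is a compact hypergroup, since $H$ is finite and $G$ compact; $G$, identified with $\{e_H\}\rtimes_{\alpha}G$, is a closed subhypergroup of $K$ with $|K/G|=|H|<+\infty$; $G$ is of strong type because $\hat{G}$ is a commutative hypergroup; and $K$ is of strong type by Lemma~\ref{Lemma4.5}, since $\alpha$ is assumed regular. Hence it remains only to verify that $(K,G)$ is an admissible hypergroup pair, i.e., conditions~(1) and~(2) of Definition~\ref{Definition3.3}; the second is immediate and the first carries the weight of the argument.

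Condition~(2) follows directly from Proposition~\ref{Proposition4.6}(4): for $\tau\in\hat{G}$ and $g\in G$,
\begin{gather*}
\operatorname{res}_G^{K}\big(\operatorname{ind}_G^{K}\operatorname{ch}(\tau)\big)(g)=\frac{w(\hat{H}(g))}{w(\hat{H})}\operatorname{ch}(\tau)(g),
\end{gather*}
so that taking $\tau=\tau_0$ gives $\operatorname{res}_G^{K}(\operatorname{ind}_G^{K}\operatorname{ch}(\tau_0))(g)=\frac{w(\hat{H}(g))}{w(\hat{H})}$, and multiplication by $\operatorname{ch}(\tau)(g)$ yields exactly the required identity $\operatorname{res}_G^{K}(\operatorname{ind}_G^{K}\operatorname{ch}(\tau))=\operatorname{ch}(\tau)\operatorname{res}_G^{K}(\operatorname{ind}_G^{K}\operatorname{ch}(\tau_0))$.

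For condition~(1) I would fix $\pi=\pi^{(\chi,\sigma)}\in\hat{K}$ with $\chi\in\hat{H}$, $\sigma\in\widehat{G(\chi)}$ (all irreducible representations of $K$ are of this form by Proposition~\ref{Proposition4.1}) and evaluate both sides of the desired equality as functions on $K$. By the character formula of Proposition~\ref{Proposition4.2}, $\operatorname{res}_G^{K}\operatorname{ch}(\pi)(g)=\operatorname{ch}(\pi)(h_0,g)=\int_G\operatorname{ch}(\sigma)\big(sgs^{-1}\big)1_{G(\chi)}\big(sgs^{-1}\big)\omega_G(ds)=:\phi(g)$, which is the normalized character of $u^{\sigma}=\operatorname{ind}_{G(\chi)}^{G}\sigma$ and hence a convex combination of characters from $\hat{G}$. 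Consequently $\big(\operatorname{res}_G^{K}\operatorname{ch}(\pi)\big)\operatorname{ch}(\tau)=\phi\operatorname{ch}(\tau)$ lies in $M_c^1(\mathcal{K}(\hat{G}))$, and applying Definition~\ref{Definition3.1} together with Proposition~\ref{Proposition4.6}(3) (extended by linearity to such combinations) gives
\begin{gather*}
\operatorname{ind}_G^{K}\big(\big(\operatorname{res}_G^{K}\operatorname{ch}(\pi)\big)\operatorname{ch}(\tau)\big)(h,g)=\frac{w(\hat{H}(g))}{w(\hat{H})}\,\omega_{\hat{H}(g)}(h)\,\phi(g)\operatorname{ch}(\tau)(g).
\end{gather*}
On the other hand, Proposition~\ref{Proposition4.6}(3) gives $\operatorname{ind}_G^{K}\operatorname{ch}(\tau)(h,g)=\frac{w(\hat{H}(g))}{w(\hat{H})}\omega_{\hat{H}(g)}(h)\operatorname{ch}(\tau)(g)$, so that
\begin{gather*}
\big(\operatorname{ch}(\pi)\operatorname{ind}_G^{K}\operatorname{ch}(\tau)\big)(h,g)=\operatorname{ch}(\pi)(h,g)\,\frac{w(\hat{H}(g))}{w(\hat{H})}\omega_{\hat{H}(g)}(h)\operatorname{ch}(\tau)(g).
\end{gather*}
Thus condition~(1) reduces to the pointwise identity $\omega_{\hat{H}(g)}(h)\operatorname{ch}(\pi)(h,g)=\omega_{\hat{H}(g)}(h)\phi(g)$ for all $h\in H$, $g\in G$.

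To close the argument I would expand $\operatorname{ch}(\pi)(h,g)$ by Proposition~\ref{Proposition4.2} as $\int_G\chi(\alpha_s(h))\operatorname{ch}(\sigma)(sgs^{-1})1_{G(\chi)}(sgs^{-1})\omega_G(ds)$; since this differs from the integral defining $\phi(g)$ only by the factor $\chi(\alpha_s(h))$, which is weighted by $1_{G(\chi)}(sgs^{-1})$, it suffices to show that $\chi(\alpha_s(h))\,\omega_{\hat{H}(g)}(h)=\omega_{\hat{H}(g)}(h)$ for all $h\in H$ whenever $sgs^{-1}\in G(\chi)$. Writing $\psi:=\hat{\alpha}_{s^{-1}}(\chi)$, one has $\chi(\alpha_s(h))=\psi(h)$, and $sgs^{-1}\in G(\chi)$ is equivalent to $\psi\in\hat{H}(g)$ (compare Proposition~\ref{Proposition4.6}(2)), while $\hat{H}(g)$ is a subhypergroup of $\hat{H}$ by Proposition~\ref{Proposition4.6}(1). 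Since $H$ is finite commutative of strong type, $H\cong\hat{\hat{H}}$, so $h$ is a character of $\hat{H}$; hence $\psi(h)\,\omega_{\hat{H}(g)}(h)=\omega_{\hat{H}(g)}(h)$ is nothing but the functional transcription of the translation invariance $\varepsilon_\psi\hat{*}\,\omega_{\hat{H}(g)}=\omega_{\hat{H}(g)}$ of the normalized Haar measure of the subhypergroup $\hat{H}(g)$, valid because $\psi\in\hat{H}(g)$. Having verified both admissibility conditions, Theorem~\ref{Theorem3.8} shows that $\mathcal{K}\big(\widehat{H\rtimes_{\alpha}G}\cup\hat{G},\mathbb{Z}_q(2)\big)$ is a hypergroup, and it is discrete and commutative (and at most countable) by Remark~\ref{Remark3.9}(1). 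The main obstacle is condition~(1): once the character formulae of Propositions~\ref{Proposition4.2} and~\ref{Proposition4.6} are combined so as to isolate the factor $\chi(\alpha_s(h))$, all that remains is this short Haar-invariance argument on the subhypergroup $\hat{H}(g)$.
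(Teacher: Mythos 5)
Your proposal is correct and follows essentially the same route as the paper: both reduce admissibility to the character formulae of Propositions~\ref{Proposition4.2} and~\ref{Proposition4.6}, and both settle condition~(1) by the same key step, namely absorbing the translated character $\hat{\alpha}_s^{-1}(\chi)$ into the normalized Haar measure $\omega_{\hat{H}(g)}$ of the subhypergroup $\hat{H}(g)$ via Proposition~\ref{Proposition4.6}(1),(2). The only cosmetic difference is that you compute from $\operatorname{ind}_G^{K}\big(\big(\operatorname{res}_G^{K}\operatorname{ch}(\pi)\big)\operatorname{ch}(\tau)\big)$ toward $\operatorname{ch}(\pi)\operatorname{ind}_G^{K}\operatorname{ch}(\tau)$ while the paper runs the identical chain of equalities in the opposite direction.
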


\begin{proof} By the Mackey machine an irreducible representation $\pi$ of $K=H\rtimes_{\alpha}G$ is given by
\begin{gather*}\pi = \pi^{(\chi,\tau_1)} = \operatorname{ind}_{H\rtimes_{\alpha}G(\chi)}^{H\rtimes_{\alpha}G}(\chi\odot\tau_1),\end{gather*}
where $\chi\in \hat{H}, \tau_1 \in \widehat{G(\chi)}$ and
\begin{gather*}
\pi^{(\chi,\tau_1)} (h,g)
= \int_G \chi(\alpha_s(h))\operatorname{ch}(\tau_1)\big(sgs^{-1}\big) 1_{G(\chi)}\big(sgs^{-1}\big) \omega_G(ds),
\end{gather*}
by Propositions~\ref{Proposition4.1} and~\ref{Proposition4.2}.

(1) For $\tau \in \hat{G}$ and $\pi^{(\chi,\tau_1)} \in \hat{K} = \widehat{H\rtimes_{\alpha}G}$, applying Proposition~\ref{Proposition4.6},
\begin{gather*}
\big({\operatorname{ind}}_G^{H\rtimes_{\alpha}G}
 \operatorname{ch}(\tau) \cdot \operatorname{ch}\big( \pi^{(\chi,\tau_1)}\big)\big)(h,g)
= \big({\operatorname{ind}}_G^{H\rtimes_{\alpha}G}
 \operatorname{ch}(\tau)\big)(h,g) \operatorname{ch}\big( \pi^{(\chi,\tau_1)}\big)(h,g) \\
\qquad{} = \frac{w(\hat{H}(g))}{w(\hat{H})} \omega_{\hat{H}(g)}(h)
\cdot \operatorname{ch}(\tau)(g) \int_G \chi(\alpha_s(h))\operatorname{ch}(\tau_1)\big(sgs^{-1}\big) 1_{G(\chi)}\big(sgs^{-1}\big) \omega_G(ds)\\
\qquad{}= \frac{w(\hat{H}(g))}{w(\hat{H})}
\cdot \operatorname{ch}(\tau)(g) \int_G \omega_{\hat{H}(g)}(h) \hat{\alpha}^{-1}_s(\chi)(h)\operatorname{ch}(\tau_1)\big(sgs^{-1}\big) 1_{G(\chi)}\big(sgs^{-1}\big) \omega_G(ds)\\
\qquad{}= \frac{w(\hat{H}(g))}{w(\hat{H})}
\cdot \operatorname{ch}(\tau)(g) \int_G (\omega_{\hat{H}(g)} \hat{\alpha}^{-1}_s (\chi))(h)\operatorname{ch}(\tau_1)\big(sgs^{-1}\big) 1_{G(\chi)}\big(sgs^{-1}\big) \omega_G(ds)\\
\qquad{}= \frac{w(\hat{H}(g))}{w(\hat{H})}
\cdot \operatorname{ch}(\tau)(g) \int_G \omega_{\hat{H}(g)}(h )\operatorname{ch}(\tau_1)\big(sgs^{-1}\big) 1_{G(\chi)}\big(sgs^{-1}\big) \omega_G(ds)\\
\qquad{} = \frac{w(\hat{H}(g))}{w(\hat{H})} \omega_{\hat{H}(g)}(h )
\cdot \operatorname{ch}(\tau)(g) \int_G \operatorname{ch}(\tau_1)\big(sgs^{-1}\big) 1_{G(\chi)}\big(sgs^{-1}\big) \omega_G(ds)\\
\qquad{} = \big({\operatorname{ind}}_G^{H\rtimes_{\alpha}G} \big({\operatorname{ch}}(\tau) \big({\operatorname{res}}_G^{H\rtimes_{\alpha}G}\operatorname{ch}(\pi)\big)\big)\big)(h,g).
\end{gather*}
Hence we obtain the admissibility condition~(1).

(2) For $\tau \in \hat{G}$, applying Proposition~\ref{Proposition4.6},
\begin{gather*}
\operatorname{res}_G^{H\rtimes_{\alpha}G}\big({\operatorname{ind}}_G^{H\rtimes_{\alpha}G}\operatorname{ch}(\tau)\big)(g)= \frac{w(\hat{H}(g))}{w(\hat{H})} \cdot \operatorname{ch}(\tau)(g)= \frac{w(\hat{H}(g))}{w(\hat{H})} \cdot \operatorname{ch}(\tau_0)(g)\cdot \operatorname{ch}(\tau)(g)\\
\hphantom{\operatorname{res}_G^{H\rtimes_{\alpha}G}\big({\operatorname{ind}}_G^{H\rtimes_{\alpha}G}\operatorname{ch}(\tau)\big)(g)}{}
= \operatorname{res}_G^{H\rtimes_{\alpha}G}\big({\operatorname{ind}}_G^{H\rtimes_{\alpha}G}\tau_0\big)(g) \cdot \operatorname{ch}(\tau)(g).
\end{gather*}
Hence we see the admissibility condition (2).

By Theorem~\ref{Theorem3.8} we see that $\mathcal{K}\big(\widehat{H\rtimes_{\alpha}G} \cup \hat{G},\mathbb{Z}_q(2)\big)$ is a discrete commutative hypergroup.
\end{proof}

\section[Examples of $\mathcal{K}\big(\hat{H} \cup \widehat{H_0}, \mathbb{Z}_q(2)\big)$]{Examples of $\boldsymbol{\mathcal{K}\big(\hat{H} \cup \widehat{H_0}, \mathbb{Z}_q(2)\big)}$}\label{section5}

In this section we illustrate some special case of
\begin{gather*}\mathcal{K}\big(\hat{H} \cup \widehat{H_0}, \mathbb{Z}_q(2)\big)
= \big\{(\operatorname{ch}(\pi_i),\circ), (\operatorname{ch}(\tau_j), \bullet) \colon \pi_i \in \hat{H}, \tau_j \in \widehat{H_0}\big\}.
\end{gather*}
We denote $(\operatorname{ch}(\pi_i),\circ)$ for $\pi_i \in \hat{H}$ by $\gamma_i$ and $(\operatorname{ch}(\tau_j), \bullet)$ for $\tau_j \in \widehat{H_0}$ by $\rho_j$ hereafter.

\begin{Example}\label{Example5.1} Let $H = \mathbb{Z}_p(2) = \{h_0, h_1\}$ and $H_0 = \{h_0\}$ where $h_0$ is unit of $H$. In this case $\hat{H} = \{\pi_0, \pi_1\}$ and $\widehat{H_0} = \{\tau_0\}$. The Frobenius diagram (we refer to~\cite{HKTY1}) is
$$
\includegraphics[scale=1]{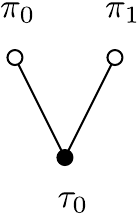}
$$
The structure equations of $\mathcal{K}\big(\widehat{\mathbb{Z}_p(2)} \cup \widehat{ \{h_0\} }, \mathbb{Z}_q(2)\big) = \{\gamma_0, \gamma_1, \rho_0\}$ are
\begin{gather*}
\gamma_1 \gamma_1 = p \gamma_0 + (1-p)\gamma_1,\qquad \rho_0 \rho_0 = \frac{q}{2}\gamma_0 + \frac{q}{2}\gamma_1 + (1-q) \rho_0,\qquad \gamma_1 \rho_0 = \rho_0.
\end{gather*}
We note that $\mathcal{K}\big(\widehat{\mathbb{Z}_p(2)} \cup \widehat{\{h_0\}}, \mathbb{Z}_q(2)\big) \cong \mathbb{Z}_p(2) \vee \mathbb{Z}_q(2)$ and $(p,q)$-deformations of the hypergroup associated with Dynkin diagram $A_3$ refer to Sunder--Wildberger~\cite{SW}.
\end{Example}

\begin{Example}\label{Example5.2} Let $H = \mathbb{Z}_p(3) = \{h_0, h_1, h_2\}$ and $H = \{ h_0 \}$ In this case $\hat{H} = \{\pi_0, \pi_1, \pi_2 \}$ and
 $\widehat{H_0} = \{\tau_0\}$. The Frobenius diagram is
$$
\includegraphics[scale=1]{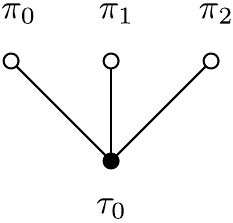}
$$
The structure equations of $\mathcal{K}\big(\widehat{\mathbb{Z}_p(3)} \cup \widehat{\{h_0\}}, \mathbb{Z}_q(2)\big) = \{\gamma_0, \gamma_1, \gamma_2, \rho_0\}$
are
\begin{gather*}
\gamma_1 \gamma_1 = \frac{1-p}{2}\gamma_1 + \frac{1+p}{2}\gamma_2,\qquad \gamma_2 \gamma_2 = \frac{1+p}{2}\gamma_1 + \frac{1-p}{2}\gamma_2,\\
\gamma_1 \gamma_2 = p\gamma_0 + \frac{1-p}{2} \gamma_1 + \frac{1-p}{2}\gamma_2,\qquad \rho_0 \rho_0 = \frac{q}{3}\gamma_0 + \frac{q}{3}\gamma_1 + \frac{q}{3}\gamma_2 + (1-q)\rho_0,\\
\gamma_1 \rho_0 = \gamma_2 \rho_0 = \rho_0.
\end{gather*}

We note that $\mathcal{K}\big(\widehat{\mathbb{Z}_p(3)} \cup \widehat{\{h_0\}}, \mathbb{Z}_q(2)\big) \cong \mathbb{Z}_p(3) \vee \mathbb{Z}_q(2)$ and these hypergroups are $(p,q)$-deformations of the hypergroup associated with Dynkin diagram $D_4$ constructed by Sunder--Wildberger~\cite{SW}.
\end{Example}

\begin{Example}\label{Example5.3} Let $H = \mathbb{Z}_{(p,r)}(4) = \{h_0, h_1, h_2, h_3\}$ and $H_0 = \{h_0\}$. In this case $\hat{H} = \{\pi_0, \pi_1$, $\pi_2, \pi_3\}$ and $\widehat{H_0} = \{\tau_0\}$. The Frobenius diagram is
$$
\includegraphics[scale=1]{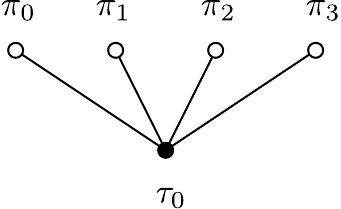}
$$
The structure equations of $\mathcal{K}\big(\widehat{\mathbb{Z}_{(p,r)}(4)} \cup \widehat{\{h_0\}}, \mathbb{Z}_q(2)\big)= \{\gamma_0, \gamma_1, \gamma_2, \gamma_3, \rho_0\}$ are
\begin{gather*}
\gamma_1 \gamma_1 = \gamma_3 \gamma_3 = \frac{1-p}{2}\gamma_1 + p\gamma_2 + \frac{1-p}{2}\gamma_3,\qquad \gamma_2 \gamma_2 = r\gamma_0 + (1-r)\gamma_2,\\
\gamma_1 \gamma_2 = \frac{1-r}{2}\gamma_1 + \frac{1+r}{2}\gamma_3,\qquad \gamma_1 \gamma_3 = \frac{2pr}{1 + r}\gamma_0 +
\frac{1-p}{2}\gamma_1 + \frac{p-pr}{1+r}\gamma_2 + \frac{1-p}{2}\gamma_3,\\
\gamma_2 \gamma_3 = \frac{1+r}{2}\gamma_1 + \frac{1-r}{2}\gamma_3,\qquad
\rho_0 \rho_0 = \frac{q}{4}\gamma_0 + \frac{q}{4}\gamma_1 + \frac{q}{4}\gamma_2 + \frac{q}{4}\gamma_3 + (1-q) \rho_0,\\
\gamma_1 \rho_0 = \gamma_2 \rho_0 = \gamma_3 \rho_0 = \rho_0.
\end{gather*}
We note that $\mathcal{K}\big(\widehat{\mathbb{Z}_{(p,r)}(4)} \cup \widehat{\{h_0\}}, \mathbb{Z}_q(2)\big) \cong \mathbb{Z}_{(p,r)}(4) \vee \mathbb{Z}_q(2)$.
\end{Example}

\begin{Example}\label{Example5.4}Let $H = \mathbb{Z}_{(p,r)}(4) = \{h_0, h_1, h_2, h_3\}$ and $H_0 = \mathbb{Z}_p(2) = \{h_0, h_2\}$. In this case
$\hat{H} = \{\pi_0, \pi_1, \pi_2, \pi_3\}$ and $\widehat{H_0} = \{\tau_0, \tau_1\}$. The Frobenius diagram is
$$
\includegraphics[scale=1]{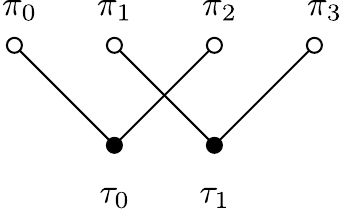}
$$
The structure equations of $\mathcal{K}\big(\widehat{\mathbb{Z}_{(p,r)}(4)} \cup \widehat{\mathbb{Z}_p(2)}, \mathbb{Z}_q(2)\big)= \{\gamma_0, \gamma_1, \gamma_2, \gamma_3, \rho_0, \rho_1\}$ are
\begin{gather*}
\gamma_1 \gamma_1 = \gamma_3 \gamma_3 = \frac{1-p}{2}\gamma_1 + p\gamma_2 + \frac{1-p}{2}\gamma_3,\qquad
\gamma_2 \gamma_2 = r\gamma_0 + (1-r)\gamma_2,\\
\gamma_1 \gamma_2 = \frac{1-r}{2}\gamma_1 + \frac{1+r}{2}\gamma_3,\qquad \gamma_1 \gamma_3 = \frac{2pr}{1 + r}\gamma_0 +
\frac{1-p}{2}\gamma_1 + \frac{p-pr}{1+r}\gamma_2 + \frac{1-p}{2}\gamma_3,\\
\gamma_2 \gamma_3 = \frac{1+r}{2}\gamma_1 + \frac{1-r}{2}\gamma_3,\qquad
\rho_0 \rho_0 = \rho_1 \rho_1 = \frac{q}{2}\gamma_0 + \frac{q}{2}\gamma_2 + (1-q)\rho_0,\\
\rho_0 \rho_1 = \frac{q}{2}\gamma_1 + \frac{q}{2}\gamma_3 + (1-q)\rho_1,\qquad
\gamma_1 \rho_0 = \gamma_3 \rho_0 = \gamma_2 \rho_1 = \rho_1,\\
\gamma_1 \rho_1 = \gamma_3 \rho_1 = \gamma_2 \rho_0 = \rho_0.
\end{gather*}
\end{Example}

\begin{Example}\label{Example5.5} Let $H = S_p(3) = \mathbb{Z}_p(3) \rtimes_\alpha \mathbb{Z}_2$ and $H_0 = \mathbb{Z}_2$. In this case $\hat{H} = \{\pi_0, \pi_1, \pi_2\}$ $(\dim \pi_2 = 2)$ and $\widehat{H_0} = \{\tau_0, \tau_1\}$. The Frobenius diagram is
$$
\includegraphics[scale=1]{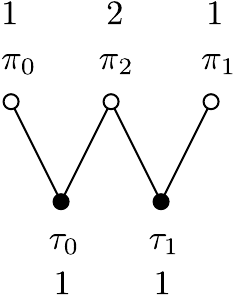}
$$
The structure equations of $\mathcal{K}\big(\widehat{S_p(3)} \cup \widehat{\mathbb{Z}_2}, \mathbb{Z}_q(2)\big) = \{\gamma_0, \gamma_1, \gamma_2, \rho_0, \rho_1\}$ are
\begin{gather*}
\gamma_1\gamma_1 = \gamma_0,\qquad \gamma_2 \gamma_2 = \frac{p}{4}\gamma_0 + \frac{p}{4}\gamma_1 + \left(1- \frac{p}{2} \right)\gamma_2,\qquad
\gamma_1 \gamma_2 = \gamma_2,\\
\rho_0\rho_0 = \rho_1 \rho_1 = \frac{q}{3}\gamma_0 + \frac{2q}{3}\gamma_2 + (1-q)\rho_0,\qquad \rho_0 \rho_1 = \frac{q}{3}\gamma_1 + \frac{2q}{3}\gamma_2 + (1-q)\rho_1,\\
\gamma_1 \rho_0 = \rho_1,\qquad \gamma_1 \rho_1 = \rho_0,\qquad \gamma_2 \rho_0 = \gamma_2 \rho_1 = \frac{1}{2}\rho_0 + \frac{1}{2}\rho_1.
\end{gather*}

We note that these hypergroups are $(p,q)$-deformations of the hypergroup associated with Dynkin diagram $A_5$ refer to Sunder--Wildberger \cite{SW}.
\end{Example}

\begin{Example}\label{Example5.6}Let $H = D_{(p,r)}(4) = \mathbb{Z}_{(p,r)}(4) \rtimes_\alpha \mathbb{Z}_2$ and $H_0 = \mathbb{Z}_2$. In this case $\hat{H} = \{\pi_0, \pi_1, \pi_2, \pi_3$, $\pi_4\}$ $(\dim \pi_4 = 2)$ and $\widehat{H_0} = \{\tau_0, \tau_1\}$. The Frobenius diagram is
$$
\includegraphics[scale=1]{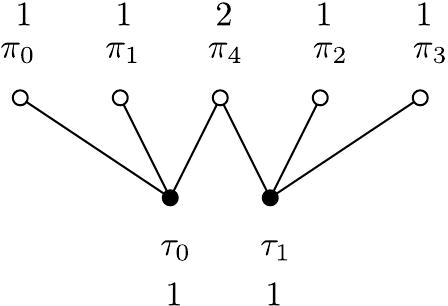}
$$
The structure equations of $\mathcal{K}\big(\widehat{D_{(p,r)}(4)} \cup \widehat{\mathbb{Z}_2}, \mathbb{Z}_q(2)\big) = \{\gamma_0, \gamma_1, \gamma_2, \gamma_3, \gamma_4, \rho_0, \rho_1\}$ are
\begin{gather*}
\gamma_1 \gamma_1 = \gamma_0,\qquad \gamma_2 \gamma_2 = \gamma_3 \gamma_3 = r\gamma_0 + (1-r)\gamma_2,\\
\gamma_4 \gamma_4 = \frac{pr}{2(1+r)}\gamma_0 + \frac{pr}{2(1+r)}\gamma_1 + \frac{p}{2(1+r)}\gamma_2 + \frac{p}{2(1+r)}\gamma_3 + (1-p)\gamma_4,\\
\gamma_1 \gamma_2 = \gamma_3,\qquad \gamma_1 \gamma_3 = \gamma_2,\qquad
\gamma_1 \gamma_4 = \gamma_2 \gamma_4 = \gamma_3 \gamma_4 = \gamma_4, \qquad \gamma_2 \gamma_3 = r\gamma_1 + (1-r)\gamma_3,\\
\rho_0 \rho_0 = \rho_1 \rho_1 = \frac{q}{4}\gamma_0 +\frac{q}{4}\gamma_1 + \frac{q}{2}\gamma_4 + (1-q)\rho_0,\qquad
\rho_0 \rho_1 = \frac{q}{4}\gamma_2 + \frac{q}{4}\gamma_3 + \frac{q}{2}\gamma_4 + (1-q)\rho_1,\\
\gamma_1 \rho_0 = \gamma_2 \rho_1 = \gamma_3 \rho_1 = \rho_0, \qquad \gamma_1 \rho_1 = \gamma_2 \rho_0 = \gamma_3 \rho_0 = \rho_1,\qquad
\gamma_4 \rho_0 = \gamma_4 \rho_1 = \frac{1}{2}\rho_0 + \frac{1}{2}\rho_1.
\end{gather*}
\end{Example}

\begin{Example}\label{Example5.7} Let $H = A_4 = (\mathbb{Z}_2 \times \mathbb{Z}_2) \rtimes_\alpha \mathbb{Z}_3$ and $H_0 = \mathbb{Z}_3$. In this case $\hat{H} = \{\pi_0, \pi_1, \pi_2, \pi_3 \}$ $(\dim \pi_3 = 3)$ and $\widehat{H_0} = \{\tau_0, \tau_1, \tau_2\}$. The Frobenius diagram is
$$
\includegraphics[scale=1]{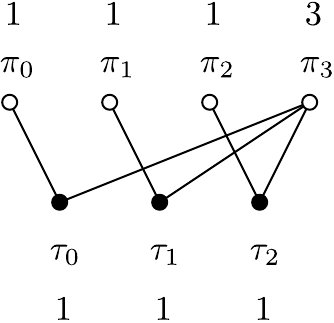}
$$
The structure equations of $\mathcal{K}\big(\widehat{A_4} \cup \widehat{\mathbb{Z}_3}, \mathbb{Z}_q(2)\big) = \{\gamma_0, \gamma_1, \gamma_2, \gamma_3, \gamma_4, \rho_0, \rho_1, \rho_2\}$ are
\begin{gather*}
\gamma_1 \gamma_1 = \gamma_2,\qquad \gamma_2 \gamma_2 = \gamma_1,\qquad \gamma_3 \gamma_3
= \frac{1}{9}\gamma_0 + \frac{1}{9}\gamma_1 + \frac{1}{9}\gamma_2 + \frac{2}{3}\gamma_3,\qquad
\gamma_1 \gamma_2 = \gamma_0,\\
\gamma_1 \gamma_3 = \gamma_3,\qquad \gamma_2 \gamma_3 = \gamma_3,\qquad \rho_0 \rho_0 = \rho_1 \rho_2 = \frac{q}{4}\gamma_0 + \frac{3q}{4}\gamma_3 + (1-q)\rho_0,\\
\rho_1 \rho_1 = \rho_0 \rho_2 = \frac{q}{4}\gamma_2 + \frac{3q}{4}\gamma_3 + (1-q)\rho_2,\qquad
\rho_2 \rho_2 = \rho_0 \rho_1 = \frac{q}{4}\gamma_1 + \frac{3q}{4}\gamma_3 + (1-q)\rho_1,\\
\gamma_1 \rho_0 = \rho_1,\qquad \gamma_2 \rho_0 = \gamma_1 \rho_1 = \rho_2,\qquad \gamma_2 \rho_1 = \rho_0,\\
\gamma_3 \rho_0 = \gamma_3 \rho_1 = \gamma_3 \rho_2 = \frac{1}{3}\rho_0 + \frac{1}{3}\rho_1 + \frac{1}{3}\rho_2.
\end{gather*}
\end{Example}

\begin{Example}\label{Example5.8} Let
$H = S_4 = A_4 \rtimes_\alpha \mathbb{Z}_2$ and $H_0 = \mathbb{Z}_2$. In this case $\hat{H} = \{\pi_0, \pi_1, \pi_2, \pi_3, \pi_4 \}$ and $\widehat{H_0} = \{\tau_0, \tau_1\}$. The Frobenius diagram is
$$
\includegraphics[scale=1]{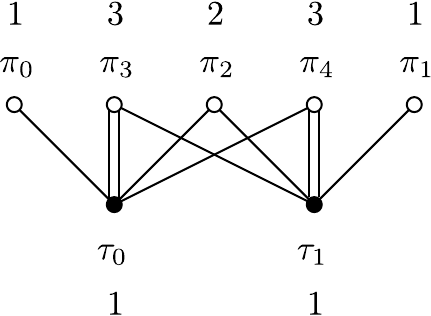}
$$
The structure equations of $\mathcal{K}\big(\widehat{S_4} \cup \widehat{\mathbb{Z}_2}, \mathbb{Z}_q(2)\big) = \{\gamma_0, \gamma_1, \gamma_2, \gamma_3, \gamma_4, \rho_0, \rho_1\}$ are
\begin{gather*}
\gamma_1 \gamma_1 = \gamma_0,\qquad
\gamma_2 \gamma_2 = \frac{1}{4}\gamma_0 + \frac{1}{4}\gamma_1 + \frac{1}{2}\gamma_2,\\
\gamma_3 \gamma_3 = \gamma_4 \gamma_4
= \frac{1}{9}\gamma_0 + \frac{2}{9}\gamma_2 + \frac{1}{3}\gamma_3 + \frac{1}{3}\gamma_4,\\
\gamma_1 \gamma_2 = \gamma_2,\qquad \gamma_1 \gamma_3 = \gamma_4,\qquad
\gamma_1 \gamma_4 = \gamma_3,\qquad
\gamma_2 \gamma_3 = \gamma_2 \gamma_4
= \frac{1}{2}\gamma_3 + \frac{1}{2}\gamma_4,\\
\gamma_3 \gamma_4
= \frac{1}{9}\gamma_1 + \frac{2}{9}\gamma_2 + \frac{1}{3}\gamma_3 + \frac{1}{3}\gamma_4,\\
\rho_0 \rho_0 = \rho_1 \rho_1 =
\frac{q}{12}\gamma_0 + \frac{q}{6}\gamma_2 + \frac{q}{2}\gamma_3 +
\frac{q}{4}\gamma_4 + (1-q)\rho_0,\\
\rho_0 \rho_1 = \rho_1 \rho_0 =
\frac{q}{12}\gamma_1 + \frac{q}{6}\gamma_2 + \frac{q}{4}\gamma_3 +
\frac{q}{2}\gamma_4 + (1-q)\rho_1,\\
\gamma_0 \rho_0 = \gamma_1 \rho_1 = \rho_0,\qquad \gamma_0 \rho_1 = \gamma_1 \rho_0 = \rho_1,\qquad
\gamma_2 \rho_0 = \gamma_2 \rho_1 = \frac{1}{2}\rho_0 + \frac{1}{2}\rho_1,\\
\gamma_3 \rho_0 = \gamma_4 \rho_1 = \frac{2}{3}\rho_0 + \frac{1}{3}\rho_1,\qquad
\gamma_4 \rho_0 = \gamma_3 \rho_1 = \frac{1}{3}\rho_0 + \frac{2}{3}\rho_1.
\end{gather*}
\end{Example}

\begin{Example}\label{Example5.9} Let $H = S_4 $ and $H_0 = S_3$. In this case $\hat{H} = \{\pi_0, \pi_1, \pi_2, \pi_3, \pi_4 \}$ and $\widehat{H_0} = \{\tau_0, \tau_1, \tau_2\}$. The Frobenius diagram is
$$
\includegraphics[scale=1]{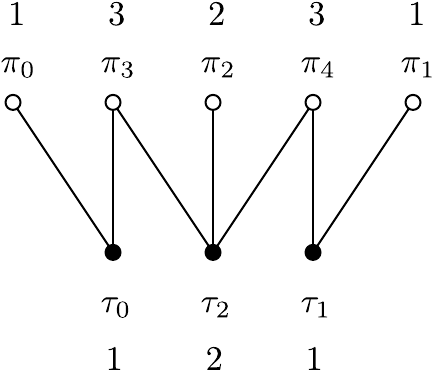}
$$
The structure equations of $\mathcal{K}\big(\widehat{S_4} \cup \widehat{S_3}, \mathbb{Z}_q(2)\big) = \{\gamma_0, \gamma_1, \gamma_2, \gamma_3, \gamma_4, \rho_0, \rho_1, \rho_2\}$ are
\begin{gather*}
\gamma_1 \gamma_1 = \gamma_0,\qquad \gamma_2 \gamma_2 = \frac{1}{4}\gamma_0 + \frac{1}{4}\gamma_1 + \frac{1}{2}\gamma_2,\qquad
\gamma_3 \gamma_3 = \gamma_4 \gamma_4 = \frac{1}{9}\gamma_0 + \frac{2}{9}\gamma_2 + \frac{1}{3}\gamma_3 + \frac{1}{3}\gamma_4,\\
\gamma_1 \gamma_2 = \gamma_2,\qquad \gamma_1 \gamma_3 = \gamma_4,\qquad \gamma_1 \gamma_4 = \gamma_3,\qquad
\gamma_2 \gamma_3 = \gamma_2 \gamma_4 = \frac{1}{2}\gamma_3 + \frac{1}{2}\gamma_4,\\
\gamma_3 \gamma_4 = \frac{1}{9}\gamma_1 + \frac{2}{9}\gamma_2 + \frac{1}{3}\gamma_3 + \frac{1}{3}\gamma_4,\qquad
\rho_0 \rho_0 = \rho_1 \rho_1 = \frac{q}{4}\gamma_0 + \frac{3q}{4}\gamma_3 + (1-q)\rho_0,\\
 \rho_2 \rho_2 = \frac{q}{16}\gamma_0 + \frac{q}{16}\gamma_1 + \frac{q}{8}\gamma_2 + \frac{3q}{8}\gamma_3 + \frac{3q}{8}\gamma_4 +
\frac{1-q}{4}\rho_0 + \frac{1-q}{4}\rho_1 + \frac{1-q}{2}\rho_2,\\
\rho_1 \rho_2 = \frac{q}{4}\gamma_2 + \frac{3q}{8}\gamma_3 + \frac{3q}{8}\gamma_4 + (1-q)\rho_2,\qquad
\gamma_0 \rho_0 = \gamma_1 \rho_1 = \rho_0,\qquad \gamma_0 \rho_1 = \gamma_1 \rho_0 = \rho_1,\\
\gamma_0 \rho_2 = \gamma_1 \rho_2 = \gamma_2 \rho_0 = \gamma_2 \rho_1 = \rho_2,\qquad
\gamma_3 \rho_0 = \gamma_4 \rho_1 = \frac{1}{3}\rho_0 + \frac{2}{3}\rho_2,\\
\gamma_4 \rho_0 = \gamma_3 \rho_1 = \frac{1}{3}\rho_1 + \frac{2}{3}\rho_2,\qquad
\gamma_2 \rho_2 = \frac{1}{4}\rho_0 + \frac{1}{4}\rho_1 + \frac{1}{2}\rho_2,\\
\gamma_3 \rho_2 = \gamma_4 \rho_0 = \frac{1}{6}\rho_0 + \frac{1}{6}\rho_1 + \frac{2}{3}\rho_2.
\end{gather*}
\end{Example}

\begin{Example}\label{Example5.10} Let $H = \mathbb{Z}_p(3) \rtimes_\alpha (\mathbb{Z}_2 \times \mathbb{T}) \cong S_p(3) \times \mathbb{T}$ and
$H_0 = \mathbb{Z}_2 \times \mathbb{T}$. In this case $\hat{H} = \widehat{S_p(3) \times \mathbb{T}} = \widehat{S_p(3)} \times \hat{\mathbb{T}} = \widehat{S_p(3)} \times \mathbb{Z}$ and $\widehat{H_0} = \widehat{\mathbb{Z}_2} \times \hat{\mathbb{T}} = \mathbb{Z}_2 \times \mathbb{Z}$. The Frobenius diagram is
\begin{center}
\includegraphics[scale=1]{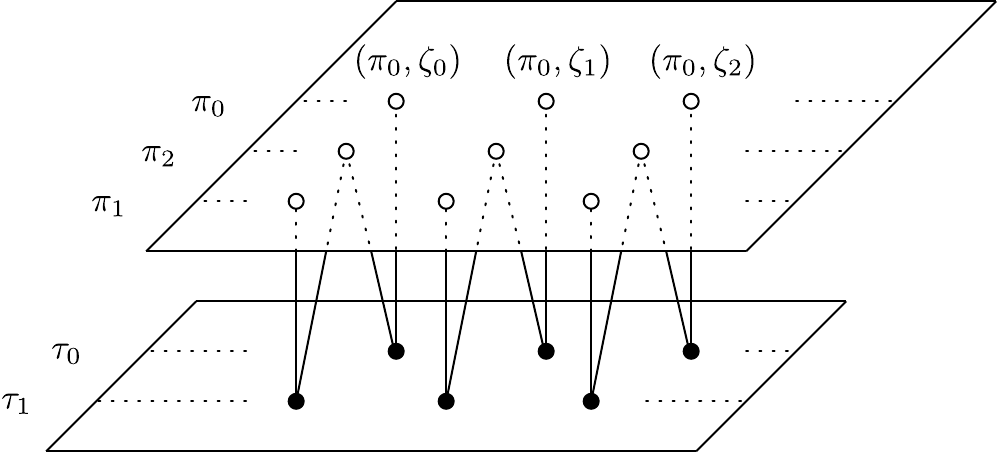}
\end{center}
We denote $(\operatorname{ch}((\pi_i, \zeta_j)), \circ)$ for $(\pi_i, \zeta_j) \in \hat{H}$ by $(\gamma_i, \xi_j)$ and $(\operatorname{ch}((\tau_k, \zeta_\ell)), \bullet)$ for $(\tau_k, \zeta_\ell) \in \widehat{H_0}$ by $(\rho_k, \xi_\ell)$. The structure equations of
\begin{gather*}
\mathcal{K}\big(\hat{H} \cup \widehat{H_0}, \mathbb{Z}_q(2)\big)
= \{(\gamma_i, \xi_j), (\rho_k, \xi_\ell) \colon i = 0,1,2, \ k = 0,1, \ j,\ell \in \mathbb{Z}\}
\end{gather*}
are
\begin{gather*}
(\gamma_1, \xi_j) (\gamma_1, \xi_{\ell}) = (\gamma_0, \xi_{j + \ell}),\qquad
(\gamma_1, \xi_j) (\gamma_2, \xi_{\ell}) = (\gamma_2, \xi_{j + \ell}),\\
(\gamma_2, \xi_j) (\gamma_2, \xi_{\ell}) = \frac{p}{4}(\gamma_0, \xi_{j + \ell}) + \frac{p}{4}(\gamma_1, \xi_{j + \ell}) +
\left(1-\frac{p}{2}\right)(\gamma_2, \xi_{j + \ell}),\\
(\rho_0, \xi_j) (\rho_0, \xi_{\ell}) = (\rho_1, \xi_j) (\rho_1, \xi_{\ell}) =
\frac{q}{3}(\gamma_0, \xi_{j + \ell}) + \frac{2q}{3}(\gamma_2, \xi_{j + \ell})
+ (1-q)(\rho_0, \xi_{j + \ell}),\\
(\rho_0, \xi_j) (\rho_1, \xi_{\ell}) = (\rho_1, \xi_\ell) (\rho_0, \xi_j) =
\frac{q}{3}(\gamma_1, \xi_{j + \ell}) + \frac{2q}{3}(\gamma_2, \xi_{j + \ell})
+ (1-q)(\rho_1, \xi_{j + \ell}),\\
(\gamma_0, \xi_j) (\rho_0, \xi_\ell) = (\gamma_1, \xi_j) (\rho_1, \xi_\ell) = (\rho_0, \xi_{j + \ell}),\\
(\gamma_0, \xi_j) (\rho_1, \xi_\ell) = (\gamma_1, \xi_j) (\rho_0, \xi_\ell) = (\rho_1, \xi_{j + \ell}),\\
(\gamma_2, \xi_j) (\rho_0, \xi_\ell) = (\gamma_2, \xi_j) (\rho_1, \xi_\ell) = \frac{1}{2}(\rho_0, \xi_{j + \ell}) + \frac{1}{2}(\rho_1, \xi_{j + \ell}).
\end{gather*}
\end{Example}

\pdfbookmark[1]{References}{ref}
\LastPageEnding

\end{document}